\xpatchcmd{\@thm}{\thm@headpunct{.}}{\thm@headpunct{}}{}{}
\newcolumntype{P}[1]{>{\centering\arraybackslash}p{#1}}
\newtheorem{theorem}{Theorem}[section]
\newtheorem{lemma}[theorem]{Lemma}
\newtheorem{remark}[theorem]{Remark}
\theoremstyle{definition}
\newtheorem*{Thm}{Theorem}
\newtheorem*{Thmm}{Main Theorem}
\newtheorem*{Cor}{Corollary}
\newtheorem{Question}[theorem]{Question}
\newcommand{\Aut}{{\mathrm {Aut}}}
\newcommand{\Syl}{\mathrm{Syl}}
\newcommand{\OO}{\mathrm{o}}
\newcommand{\sg}{|G|}
\newcommand{\nn}{n_{2}(G)}
\newcommand{\nnn}{n_{3}(G)}
\newcommand*{\rom}[1]{\expandafter\@slowromancap\romannumeral #1@}
\begin{document}
	
	\title{Characterizing some finite groups by the average order}

	\author[A. Zarezadeh]{Ashkan ZareZadeh$^{1}$}
	\author[B. Khosravi]{Behrooz Khosravi$^1$}
	\author[Z. Akhlaghi]{Zeinab Akhlaghi$^{1,2}$}

	\address{$^{1}$ Faculty of Mathematics and Computer Science, Amirkabir University of Technology (Tehran Polytechnic), 15914 Tehran, Iran.}
	\address{$^{2}$ School of Mathematics,
		Institute for Research in Fundamental Science (IPM)
		P.O. Box:19395-5746, Tehran, Iran. }
	\email{\newline \text{(A. Zarezadeh) }ashkanzarezadeh@gmail.com  \newline \text{(B. Khosravi) }khosravibbb@yahoo.com \newline \text{(Z. Akhlaghi) }z\_akhlaghi@aut.ac.ir  }

	\begin{abstract}
		The average order of a finite group $G$ is denoted by $\OO(G)$. In this note, we classify groups whose average orders are less than $\OO(S_4)$, where $S_{4}$ is the symmetric group on four elements. Moreover, we prove that $G\cong S_4$ if and only if $\OO(G)=\OO(S_4)$. As a consequence of our results we give a characterization for some finite groups by the average order. In  \cite[Theorem 1.2]{t},  the groups whose average orders are less than  $\OO(A_4)$ are classified. It is worth mentioning that to get our results we avoid using the main theorems of \cite{t} and our  results leads to  reprove   those theorems.     
		
	\end{abstract}
\thanks{The first author is supported by a grant from IPM (No. 1402200112) }
	\keywords{Element order, sum of element orders, average order, characterization}
	\subjclass[2000]{05C25, 05C69, 94B25}
	
	\maketitle
	
	\section{Introduction}
	
	For a finite group $G$, $\psi(G)$ was first introduced in \cite{1}, which denotes the sum of the element orders of $G$, i.e., $\psi(G)=\sum\limits_{x\in G}{o(x)}$. Later the average order of $G$ was defined as, $\OO(G)=\frac{\psi(G)}{|G|}$.

	At first glance, these quantities are not expected to inform us much about the structure of $G$. As an example, $\psi(A_{4})=\psi(D_{10})=31$, and also     $\OO(D_{12})=\OO(C_{4})=2.75$. If $G$ is a group such that there exist(s) exactly $k$ non-isomorphic groups with average order $\OO(G)$, then we say $G$ is {\it k-recognizable by average order}. A $1$-recognizable group is called {\it characterizable}.
	In  \cite{11}, it is conjectured that $\OO(G)<\OO(A_5)$, guaranties the solvability of  $G$, which turned out to be true when Herzog, Longobardi and Maj proved it in \cite{hlm}. In that paper, they also classified all finite groups with $\OO(G)\le \OO(S_{3})$. Later in \cite{A_5}, they proved that $A_{5}$ is characterizable by average order, meaning that $\OO(G)=\OO(A_{5})$ implies $G\cong A_{5}$. Meanwhile Tărnăuceanu in \cite{t}, classified all finite groups with $\OO(G)\le \OO(A_{4})$, where he stated that if $\OO(G)< \OO(A_{4})$, $G$ would be supersolvable, and $\OO(G)=\OO(A_{4})$ leads to $G\cong A_{4}$. As the main result of this paper, inspired by the above results, we determine the groups whose average orders are less than $2.8$. Note that the bound $2.8$ is close to $\OO(S_4)=\frac{67}{24}\approx 2.791$. As a consequence, we give a new characterization for some finite groups using the average order. As the main result, we prove the following theorem:

	\begin{Thmm}\label{A}
		Let $G$ be a finite group satisfying $\OO(G)<2.8$. Then $G$ is isomorphic to one of the following groups:
		\begin{enumerate}[\itshape(i)]
			
			\item $D_{12}$;
			
			\item $S_4$;
			
			\item $C_3$ or $C_{3}\times C_{3}$;
			
			\item  $C_2^{2k}\rtimes C_3$, for some natural number $k$, which is a Frobenius group;

			\item  a $2$-group of one of the following types, where $E\cong C_{2}^{m}$, for some $m\ge0$:
			
				\begin{enumerate}[\itshape(1)]
					\item $C_{4}$;
					\item $D_{8}\times E$;
					\item an elementary abelian $2$-group;
					\item $D(C_{4}\times C_{4}) \times E$, where $D(C_{4}\times C_{4})$ is the generalized dihedral of $C_{4}\times C_{4}$;
					\item $(D_{8}* D_{8})\times E$, where $(D_{8}* D_{8})$ is the central product of $D_{8}$ and $D_{8}$;
					\item $S(2)\times E$, where $S(2)=\langle c,x_{1}, y_{1}, x_{2}, y_{2}|  c^{2}=x_{i}^{2}=y_{i}^{2}=e$, all pairs of generators commute except $[c,x_{i}]=y_{i} \rangle$.
					
				\end{enumerate}
			\item  $C_3^{k}\rtimes C_2$, for some natural number $k$, which is a Frobenius group.
		\end{enumerate}
	\end{Thmm}

	Determining finite groups whose average orders are less than $2.8$ can also be helpful in studying the density of the quantity $\OO(G)$ which was discussed in \cite{t2}. To prove our results, we do not use \cite[Theorem 1.2]{t} and as a consequence, we pose another proof for those results.
	
   	Throughout this paper, all groups are non-trivial finite groups, for $\theta \in \Aut(G)$, $r(G,\theta)$ denotes $n(\theta)/\sg$, where $n(\theta)$ is the number of the elements mapped to their inverses by $\theta$, $n_{k}(A)$ denotes the number of the elements of order $k$ in $A\subseteq G$, and $\pi(G)$ is the set of all prime divisors of $\sg$, $H(r)$ is the central product of $r$ copies of $D_{8}$, $S(r)=\langle c,x_{1}, y_{1}, x_{2}, y_{2}, ...,x_{r}, y_{r}|  c^{2}=x_{i}^{2}=y_{i}^{2}=e$, all pairs of generators commute except $[c,x_{i}]=y_{i} \rangle$, and $D(A)$ is the generalized dihedral group of abelian group $A$.

	\begin{Cor}\label{B}
		By the notations in the main theorem:
		\begin{enumerate}[\itshape(a)]
		\item if $G$ is an elementary abelian $2$-group, or isomorphic to $D_{8}\times C_{2}^{k}$, for some integer $k\ge 0$, or one of the groups mentioned in Cases $(ii)$, $(iii)$, $(iv)$, $(vi)$, then $G$ is characterizable by average order,
		\item if $G$ is isomorphic to $D_{12}$ or $C_{4}$, then $G$ is $2$-recognizable by average order,
		\item if $G$ is isomorphic to the groups stated in \itshape(4), \itshape(5) or \itshape(6) of Case $(v)$, then $G$ is $3$-recognizable by average order.
		\end{enumerate}
	\end{Cor}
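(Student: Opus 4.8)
The plan is to treat the Main Theorem as a classification. Every average order occurring among the groups listed in (i)--(vi) is at most $\OO(S_4)=67/24<2.8$, so any finite group whose average order equals that of one of those groups must itself occur in the list. Hence Corollary~B reduces to a finite comparison: compute $\OO$ explicitly for every group and every parametrized family in (i)--(vi); for each value that occurs, count how many groups of the list attain it; and, whenever several groups share a value, verify that they are pairwise non-isomorphic.

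To obtain the explicit values I would count the elements of each order. For the two Frobenius families this is immediate from the kernel/complement structure: $C_2^{2k}\rtimes C_3$ has $4^k-1$ involutions and $2\cdot 4^k$ elements of order $3$, and $C_3^k\rtimes C_2$ has $3^k$ involutions and $3^k-1$ elements of order $3$, whence $\OO(C_2^{2k}\rtimes C_3)=8/3-1/(3\cdot 4^k)$ and $\OO(C_3^k\rtimes C_2)=5/2-1/3^k$. Similarly $\OO(C_2^n)=2-2^{-n}$ and $\OO(D_8\times C_2^m)=5/2-2^{-(m+3)}$, while $\OO(C_3)=7/3$, $\OO(C_3\times C_3)=25/9$, $\OO(C_4)=\OO(D_{12})=11/4$ and $\OO(S_4)=67/24$. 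The central computation concerns the three $2$-groups of order $32$ appearing in (v): $D(C_4\times C_4)$, $H(2)=D_8*D_8$ and $S(2)$ each have exponent $4$ with exactly $19$ involutions and $12$ elements of order $4$, so $\psi=87$ for all three, and therefore $\OO(D(C_4\times C_4)\times C_2^m)=\OO(H(2)\times C_2^m)=\OO(S(2)\times C_2^m)=11/4-2^{-(m+5)}$ for every $m\ge 0$. (For $S(2)$ one reads the element orders off the presentation; for $H(2)$ one uses the quadratic form $x\mapsto x^2$ on the Frattini quotient; for $D(C_4\times C_4)$ one uses that every element outside the base subgroup is an involution.)

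The comparison then proceeds by elementary estimates. The four strictly increasing families $2-2^{-n}$, $5/2-2^{-(m+3)}$, $5/2-1/3^k$ and $8/3-1/(3\cdot 4^k)$ (with suprema $2$, $5/2$, $5/2$, $8/3$), the family $11/4-2^{-(m+5)}$ (supremum $11/4$), and the four isolated values $7/3$, $25/9$, $11/4$, $67/24$ together realize exactly the overlap pattern asserted in the corollary. Using that $2^a=3^b$ forces $a=b=0$, together with a handful of explicit inequalities (for instance $7/3$ lies strictly between the consecutive values $13/6$ and $43/18$ of the family $5/2-1/3^k$ and strictly below $19/8$, and each value $11/4-2^{-(m+5)}\ge 87/32$ lies strictly above $8/3$ and strictly below $11/4$), one checks that each member of the four increasing families, and each of $S_4$, $C_3$, $C_3\times C_3$, is the unique group of the list with its average order; that $11/4$ is attained by exactly $C_4$ and $D_{12}$; and that, for each $m\ge 0$, the value $11/4-2^{-(m+5)}$ is attained by exactly $D(C_4\times C_4)\times C_2^m$, $H(2)\times C_2^m$ and $S(2)\times C_2^m$. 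Since the groups named in Corollary~B(a) are precisely those in cases (ii), (iii), (iv), (vi) together with the groups of (v)(2) and (v)(3), this gives (a). For (b) one only needs $C_4\not\cong D_{12}$, which is clear from their orders. For (c) one separates the three order-$2^{m+5}$ groups: $H(2)\times C_2^m$ has centre of order $2^{m+1}$ whereas $D(C_4\times C_4)\times C_2^m$ and $S(2)\times C_2^m$ have centre of order $2^{m+2}$; and the last two are told apart by the fact that $D(C_4\times C_4)\times C_2^m$ contains a copy of $C_4\times C_4$ (its base subgroup), whereas $S(2)\times C_2^m$ does not, since $S(2)$ has exactly twelve elements of order $4$, two of which (namely $cx_1$ and $cx_2$) fail to commute and so cannot all lie in one abelian subgroup, an obstruction that persists after multiplication by $C_2^m$. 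Thus each of these three groups shares its average order with precisely the other two, which is (c).

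The main obstacle I foresee is the bookkeeping in the comparison step: ruling out every accidental coincidence among the five infinite families and the four isolated values, and confirming that the order-$2^{m+5}$ families stabilize at exactly three groups. Within that, the two genuinely delicate points are the exact element-order counts for $S(2)$ and $H(2)$ underpinning the identity $\psi=87$ (hence the triple coincidence of average orders), and producing an invariant that separates $D(C_4\times C_4)\times C_2^m$ from $S(2)\times C_2^m$, which agree in order, in the number of elements of each order, and in the isomorphism type of their centres.
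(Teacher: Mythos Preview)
Your proposal is correct and follows essentially the same strategy as the paper: invoke the Main Theorem to reduce to the listed groups, compute all the average orders explicitly, and compare. The paper streamlines the comparison step by observing that if $\OO(H)=p/q$ in lowest terms then $q\mid |H|$; this immediately separates the $2$-groups from the non-$2$-groups and pins down the relevant power of $2$ in $|H|$, so one only has to scan the correct row of the two tables rather than rule out every cross-family coincidence by inequalities as you do. Conversely, you supply something the paper omits: an explicit argument that $D(C_4\times C_4)\times C_2^m$, $(D_8\!*\!D_8)\times C_2^m$ and $S(2)\times C_2^m$ are pairwise non-isomorphic, which is needed for the word ``exactly'' in the $3$-recognizability claim of part~(c).
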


	\section{preliminary results}
	
	The following lemmas are used several times in  the proof of our main results.  The first lemma is applied in  the proofs multiple times without any further reference.\\ 
	
	From the proof of \cite[Lemma 3.1]{hlm}, we have the following lemma:
	\begin{lemma}\label{pri}
		Let $G$ be a group, and $H$ a non-trivial normal subgroup of $G$.
		\begin{enumerate}[(i)]
			\item For each $x\in G$ and $h \in H$, $o(xH) \mid o(xh)$,
			\item $\frac{\psi(H)-|H|}{\sg}  +  \OO(\frac{G}{H})\le\OO(G)$. In particular, $\OO(\frac{G}{H})<\OO(G)$.
		\end{enumerate}
	\end{lemma}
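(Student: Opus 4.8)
The plan is to prove (i) first and then feed it into a coset-counting argument for (ii). For (i), fix $x\in G$ and $h\in H$. Since $h\in H$ we have $xhH=xH$ as elements of $G/H$, so $o(xhH)=o(xH)$. Now for any element $g\in G$ the order of its image in the quotient divides $o(g)$: from $g^{o(g)}=e$ we get $(gH)^{o(g)}=H$, hence $o(gH)\mid o(g)$. Applying this to $g=xh$ gives $o(xH)=o(xhH)\mid o(xh)$, which is exactly the claim.

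For (ii), I would decompose $G$ into the $|G|/|H|$ cosets of $H$ and estimate the contribution of each coset to $\psi(G)=\sum_{x\in G}o(x)$. The coset $H$ itself contributes $\sum_{h\in H}o(h)=\psi(H)$ exactly. For any other coset $xH\neq H$, part (i) gives $o(xh)\geq o(xH)$ for every $h\in H$, so its contribution $\sum_{h\in H}o(xh)$ is at least $|H|\,o(xH)$. Summing over all cosets, and using $\sum_{xH\in G/H}o(xH)=\psi(G/H)$ while noting that the identity coset contributes $o(H)=1$ to that sum, I obtain
\[
\psi(G)\ \geq\ \psi(H)+|H|\bigl(\psi(G/H)-1\bigr)\ =\ \psi(H)-|H|+|H|\,\psi\bigl(\tfrac{G}{H}\bigr).
\]
Dividing by $\sg$ and recognizing $|H|\,\psi(G/H)/\sg=\psi(G/H)/|G/H|=\OO(\tfrac{G}{H})$ yields $\dfrac{\psi(H)-|H|}{\sg}+\OO(\tfrac{G}{H})\leq\OO(G)$.

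The ``in particular'' statement is then immediate: since $H$ is non-trivial it contains an element of order greater than $1$, so $\psi(H)-|H|=\sum_{h\in H}(o(h)-1)>0$, and therefore $\OO(\tfrac{G}{H})<\OO(G)$. I do not expect a genuine obstacle here; the only point that needs care is the bookkeeping that isolates the identity coset so that the ``$-1$'' is tracked correctly, and this is precisely the orbit/coset counting already carried out in the proof of \cite[Lemma 3.1]{hlm}.
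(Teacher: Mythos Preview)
Your proof is correct and is essentially the same coset-counting argument that the paper invokes by referring to the proof of \cite[Lemma~3.1]{hlm}: bound each non-identity coset below by $|H|\,o(xH)$ using (i), keep the identity coset exact as $\psi(H)$, and divide by $|G|$. The paper does not spell this out but simply cites \cite{hlm}, so there is no difference in approach to discuss.
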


	\begin{lemma}\cite[Theoren 2.5]{Pot}\label{Pot}
		Let $\theta$ be an automorphism of $G$ and $P$ a Sylow $p$-subgroup of $G$. If $r(G,\theta)>\frac{2}{p+1}$, then $P$ is normal in $G$.
	\end{lemma}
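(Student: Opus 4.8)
The plan is to analyse the set $I=I_\theta:=\{x\in G:\theta(x)=x^{-1}\}$, so that $|I|=n(\theta)=r(G,\theta)\,|G|>\tfrac{2}{p+1}|G|$. First I would record the elementary facts that $1\in I$, that $I=I^{-1}$, and that $\langle x\rangle\subseteq I$ whenever $x\in I$ (since $\theta(x^{k})=x^{-k}$), together with the key \emph{commuting relation}: if $a,b\in I$ and also $ab\in I$, then $ab=ba$, obtained by comparing $\theta(ab)=\theta(a)\theta(b)=a^{-1}b^{-1}$ with $\theta(ab)=(ab)^{-1}=b^{-1}a^{-1}$ and inverting. Consequently, for every $a\in I$ the set $I\cap a^{-1}I=\{b\in I:ab\in I\}$ is contained in $\Centralizer_{G}(a)$ and has at least $2|I|-|G|$ elements, so $|\Centralizer_{G}(a)|\ge 2|I|-|G|$.

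For $p=2$ this already proves the lemma. Since $|I|>\tfrac23|G|$ we get $[G:\Centralizer_{G}(a)]\le 2$ for every $a\in I$, so each $\Centralizer_{G}(a)$ is normal; with $H:=\bigcap_{a\in I}\Centralizer_{G}(a)\trianglelefteq G$, the quotient $G/H$ embeds in a product of groups of order at most $2$ and is therefore an elementary abelian $2$-group. As $|I|>\tfrac12|G|$ forces $\langle I\rangle=G$, the subgroup $H$ centralises $G$, so $G/\Center(G)$ is also an elementary abelian $2$-group; hence $G$ is nilpotent and every Sylow subgroup of $G$, in particular $P$, is normal.

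For odd $p$ the inequality $|I|>\tfrac{2}{p+1}|G|$ no longer forces a useful bound on the indices $[G:\Centralizer_{G}(a)]$, so I would instead induct on $|G|$ in a minimal counterexample and set up two reductions. First, if $N\trianglelefteq G$ is $\theta$-invariant with $N\ne 1$, the projection $G\to G/N$ maps $I$ into the set $I_{\bar\theta}$ of elements inverted by the induced automorphism $\bar\theta$, with fibres of size at most $|N|$; hence $r(G/N,\bar\theta)\ge r(G,\theta)>\tfrac{2}{p+1}$ and, by induction, $PN/N\trianglelefteq G/N$, i.e.\ $PN\trianglelefteq G$. Applied to a minimal $\langle G,\theta\rangle$-invariant subgroup of $\mathbf{O}_{p}(G)$ this gives $P\trianglelefteq G$ unless $\mathbf{O}_{p}(G)=1$, so we may assume $\mathbf{O}_{p}(G)=1$. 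Secondly, let $\rho\colon G\to\operatorname{Sym}(\Omega)$ be the conjugation action on $\Omega=\mathrm{Syl}_{p}(G)$, with $|\Omega|=n_{p}\equiv 1\pmod p$ and $n_{p}\ge p+1$. As $\theta$ permutes $\mathrm{Syl}_{p}(G)$, it induces a permutation $\tau$ of $\Omega$ with $\tau\rho(g)\tau^{-1}=\rho(\theta(g))$, so $\theta$ descends to an automorphism $\bar\theta$ of $\bar G:=\rho(G)$ with $\rho(I)\subseteq I_{\bar\theta}$, whence $r(\bar G,\bar\theta)\ge r(G,\theta)>\tfrac{2}{p+1}$; moreover $\ker\rho\le N_{G}(P)$. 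If $\ker\rho\ne 1$, induction gives $\bar G$ a normal Sylow $p$-subgroup, which — being a normal $p$-subgroup of the faithful transitive group $\bar G$ acting on the set $\Omega$ whose size is coprime to $p$ — must be trivial; then $\bar G$ is a $p'$-group, all $p$-elements of $G$ (in particular $P$) lie in $\ker\rho\le N_{G}(P)$, and the Frattini argument yields $G=\ker\rho\cdot N_{G}(P)=N_{G}(P)$, a contradiction.

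The remaining case $\ker\rho=1$ is the one I expect to be the main obstacle: here $G$ itself acts faithfully and transitively on $\mathrm{Syl}_{p}(G)$ with $\mathbf{O}_{p}(G)=1$ and $r(G,\theta)>\tfrac{2}{p+1}$, yet the reduction no longer lowers $|G|$. To finish it I would appeal to the structure theory of finite groups carrying an automorphism that inverts a large proportion of their elements (in the line of Miller and of Liebeck--MacHale) — for instance an upper bound for $r(G,\theta)$ in terms of $[G:\Fit(G)]$ or of the non-abelian composition factors of $G$, incompatible with $r(G,\theta)>\tfrac{2}{p+1}$ for such $G$; note that the bound is sharp, since for $G=A_{4}$ and $p=3$ an outer automorphism attains $r(G,\theta)=\tfrac12=\tfrac{2}{p+1}$ while $\mathrm{Syl}_{3}(A_{4})$ is not normal, so the argument here must be tight. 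An alternative route would be a refined count: by $\langle x\rangle\subseteq I$ each $x\in I$ factors uniquely as $x=su=us$ with $s$ its $p$-part and $u$ its $p'$-part, both in $I$, so $|I|$ counts commuting pairs consisting of a $p$-element and a $p'$-element of $I$, giving $|I|\le\sum_{s}|I\cap\Centralizer_{G}(s)|$ over the $p$-elements $s\in I$; one would then need to control how such $s$ spread over the $\ge p+1$ Sylow $p$-subgroups.
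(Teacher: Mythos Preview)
The paper does not prove this lemma at all; it is quoted verbatim as \cite[Theorem 2.5]{Pot} and used as a black box throughout. So there is no in-paper argument to compare against, and I can only assess your attempt on its own merits.

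Your treatment of $p=2$ is correct and self-contained: from $|I|>\tfrac{2}{3}|G|$ you get $|\Centralizer_G(a)|\ge 2|I|-|G|>\tfrac{1}{3}|G|$ for every $a\in I$, hence $[G:\Centralizer_G(a)]\le 2$, and the passage to $H=\bigcap_{a\in I}\Centralizer_G(a)$ together with $\langle I\rangle=G$ forces $G/\Center(G)$ to be an elementary abelian $2$-group, so $G$ is nilpotent. The two reductions you set up for odd $p$ are also sound: both $\mathbf{O}_p(G)$ and $\ker\rho$ are characteristic, hence $\theta$-invariant, and the inequality $r(G/N,\bar\theta)\ge r(G,\theta)$ is correct; the conclusion that a minimal counterexample has $\mathbf{O}_p(G)=1$ and $\ker\rho=1$ follows.

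The genuine gap is exactly the ``remaining case'' you yourself flag: a minimal counterexample $G$ with $\mathbf{O}_p(G)=1$, acting faithfully and transitively on $\mathrm{Syl}_p(G)$, and still satisfying $r(G,\theta)>\tfrac{2}{p+1}$. Here you prove nothing --- you only gesture toward the Liebeck--MacHale structure theory or toward a commuting-pair count over the $p$-parts of elements of $I$, without carrying either through. This residual case is not a technicality: it is precisely where the substance of Potter's theorem lies, and (as your own $A_4$ example with $p=3$ shows) the bound $\tfrac{2}{p+1}$ is sharp, so there is no slack to exploit. As it stands, your proposal reduces the lemma to its hard core but does not resolve it; what you have written is a plausible outline of the easy steps, not a proof.
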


	\begin{lemma}\label{strkey}
		Let $K$ be a minimal normal subgroup of a finite group $G$. If $|K|$ is odd and $G/K\cong C_2\times C_2$, then $|K|=p$, for some prime $p$ and $G\cong C_2\times C_{2p}$ or $G\cong D(C_{2p})$. 
	\end{lemma}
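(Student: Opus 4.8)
The plan is to pin down $K$ first and then reassemble $G$ via Schur--Zassenhaus. Since $|K|$ is odd, $K$ is solvable by the Feit--Thompson theorem; being a minimal normal subgroup it is characteristically simple, hence a direct product of isomorphic simple groups, necessarily cyclic of prime order, so $K\cong C_p^{n}$ for an odd prime $p$ and some $n\ge 1$. The heart of the argument is to force $n=1$. Set $C=\Centralizer_G(K)$; as $K$ is abelian we have $K\le C\trianglelefteq G$, so $Q:=G/C$ is a quotient of $G/K\cong C_2\times C_2$, an elementary abelian $2$-group acting faithfully on $K$, and minimality of $K$ makes the $\mathbb{F}_p[Q]$-module $K$ irreducible. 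Because $p$ is odd, each generating involution of $Q$ acts on $K$ with minimal polynomial dividing $(x-1)(x+1)$, hence is diagonalizable with eigenvalues in $\{1,-1\}$; as these commuting involutions are simultaneously diagonalizable, $K$ is the direct sum of their common eigenspaces. Writing $G=KP$ for a Sylow $2$-subgroup $P$ (so $P$ surjects onto $Q$, while $K$ acts trivially on itself by conjugation), every such eigenspace is a normal subgroup of $G$, so irreducibility leaves a single eigenspace equal to $K$, on which $Q$ acts by scalars; then every $\mathbb{F}_p$-line of $K$ is normal in $G$, and minimality gives $n=1$, that is $|K|=p$.

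Now $\gcd(p,4)=1$, so Schur--Zassenhaus gives $G=C_p\rtimes P$ with the complement $P\cong G/K\cong C_2\times C_2$ (in particular the Sylow $2$-subgroup is a Klein four-group, not cyclic). Let $\varphi\colon P\to\Aut(C_p)\cong C_{p-1}$ be the conjugation action. If $\varphi$ is trivial, then $G=C_p\times P\cong C_2\times C_{2p}$. If $\varphi$ is nontrivial, its image is a nontrivial quotient of $C_2\times C_2$ lying inside the cyclic group $C_{p-1}$, hence has order $2$; so $N:=\kernel\varphi$ has order $2$, and $N$ centralizes $K$ by construction and centralizes $P$ since $P$ is abelian, whence $N\le\Center(G)$. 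Picking $s\in P\setminus N$, the group $\langle s\rangle$ acts on $C_p$ through the unique involution of $\Aut(C_p)$, which is inversion, so $C_p\rtimes\langle s\rangle\cong D_{2p}$; since $P=N\times\langle s\rangle$ with $N$ central, $G\cong C_2\times D_{2p}$.

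Finally, $C_2\times D_{2p}\cong D(C_{2p})$: the generalized dihedral group of $C_{2p}=C_2\times C_p$ is $(C_2\times C_p)\rtimes C_2$ with the complement inverting every element, and since inversion is the identity on the $C_2$ direct factor this group equals $C_2\times(C_p\rtimes C_2)=C_2\times D_{2p}$. Hence $G\cong C_2\times C_{2p}$ or $G\cong D(C_{2p})$, as claimed. I expect the only genuine obstacle to be the eigenspace/irreducibility step bounding the rank of $K$; once $|K|=p$ is in hand, the remainder is a routine analysis of the groups of order $4p$.
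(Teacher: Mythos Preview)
Your proof is correct, but it follows a different line than the paper's. You work representation-theoretically: you pass to $Q=G/\Centralizer_G(K)$, simultaneously diagonalize the commuting involutions of $Q$ on the $\mathbb{F}_p$-space $K$, and use irreducibility to force a single common eigenspace, hence scalar action and $n=1$. The paper instead argues group-theoretically: since $C_2\times C_2$ cannot be a Frobenius complement, $G$ is not Frobenius, so some index-$2$ subgroup $M\supseteq K$ is not Frobenius; for $Q\in\Syl_2(M)$ the fixed points $C_K(Q)$ form the Sylow $p$-part of $\Center(M)$, hence are characteristic in $M$ and therefore normal in $G$, and minimality forces $C_K(Q)=K$. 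From $M=K\times Q$ the paper reads off $G\cong C_2\times(K\rtimes C_2)$ or $C_2\times C_2\times K$, and minimality again gives $|K|=p$. Your eigenspace argument is more systematic and entirely self-contained (no appeal to the structure of Frobenius complements, and the ``$+1$-eigenspace is normal'' step subsumes the paper's $C_K(Q)\trianglelefteq G$ observation), while the paper's route is terser but leans on implicit facts about Frobenius groups. Your final identification of the order-$4p$ groups via the homomorphism $\varphi\colon P\to\Aut(C_p)$ and the explicit check that $C_2\times D_{2p}\cong D(C_{2p})$ is also more detailed than what the paper records.
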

	\begin{proof}
		Note that $G$ is solvable, so $K$ is an elementary abelian $p$-subgroup, for some odd prime $p$. Since $G$ is not a Frobenius group, there exists a subgroup $M$ of $G$ of index $2$, where $M$ is not a Frobenius group. Let $Q$ be a Sylow 2-subgroup of $M$.  As $C_K(Q)\le K$ is the normal Sylow $p$-subgroup of ${\bf Z}(M)$, we get that $C_{K}(Q)=K$, and so $G\cong C_2\times (K\rtimes C_2)$ or $G\cong C_2\times C_2\times K$. In both cases, since $K$ is minimal normal, $|K|=p$, as wanted.
	\end{proof}
	\begin{lemma}
		Let $\frac{m}{n}$ be a rational number, where $(m,n)=1$,
		and $m$ is even. There exists no finite group $G$ such that $\OO(G)=\frac{m}{n}$.
	\end{lemma}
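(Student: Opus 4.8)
The plan is to establish the stronger and cleaner fact that $\psi(G)$ is an odd integer for every finite group $G$; the statement of the lemma then follows at once.

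First I would look at the inversion map $x \mapsto x^{-1}$, which is an involution of the underlying set of $G$. Its fixed points are exactly the elements $x$ with $x^{2}=e$, namely the identity (of order $1$) together with the involutions of $G$ (of order $2$), while every other element of $G$ lies in a genuine two-element orbit $\{x,x^{-1}\}$ with $x\neq x^{-1}$. Since $o(x)=o(x^{-1})$, each such two-element orbit contributes $o(x)+o(x^{-1})=2\,o(x)$ to $\psi(G)=\sum_{x\in G}o(x)$, an even quantity, whereas the fixed points contribute $1+2\,n_2(G)$ in total. Hence $\psi(G)\equiv 1+2\,n_2(G)\equiv 1 \pmod 2$, so $\psi(G)$ is odd.

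Then I would conclude as follows. Suppose, for contradiction, that some finite group $G$ satisfies $\OO(G)=\psi(G)/\sg=m/n$ with $(m,n)=1$ and $m$ even. Put $g=\gcd(\psi(G),\sg)$, so that the reduced numerator is $m=\psi(G)/g$. By the previous paragraph $\psi(G)$ is odd, hence every divisor of $\psi(G)$ is odd; in particular $g$ is odd and $m=\psi(G)/g$ is odd, contradicting the choice of $m$. Therefore no such group $G$ exists.

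I do not anticipate any real obstacle here: the entire argument reduces to the parity computation for $\psi(G)$ through the inversion map, together with the elementary remark that a divisor of an odd number is odd. The only place asking for (very minor) care is the bookkeeping in the parity step — namely that the fixed points of inversion are precisely the elements of order $1$ or $2$, so that their contribution to $\psi(G)$ is exactly $1+2\,n_2(G)$ and nothing subtler.
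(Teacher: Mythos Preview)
Your proof is correct and follows essentially the same route as the paper: both arguments reduce immediately to the fact that $\psi(G)$ is odd and derive a parity contradiction from $\OO(G)=m/n$ with $m$ even. The only difference is that the paper simply invokes the oddness of $\psi(G)$ as a known fact, whereas you supply the standard proof via the inversion involution $x\mapsto x^{-1}$.
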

	\begin{proof}
		Assume that there exists a finite group $G$ such that $\OO(G)=\frac{m}{n}$.
		Then by the definition of $\OO(G)$, $\psi(G)= \frac{m}{n}\sg$. Now by the fact that $\psi(G)$ is always odd, we get a contradiction.
	\end{proof}
	
	Therefore, $\OO(G)$ can not be equal to  $2.4$, $2.8$,  $3.6$, or any even integer, for any finite group $G$.

	\begin{lemma}\label{nil}
		
		Let $G$ be a finite nilpotent group. Then the following statements hold.
		
		\begin{enumerate}[(i)]
			\item If $|\pi(G)|>1$, then $\OO(G)\ge \OO(C_{6})=3.5$, and if $G\ncong C_{6}$, then $\OO(G)>4$.
			\item If $\sg>5$ is odd and $G$ is not a $3$-group, then $\OO(G)\ge\OO(C_{5}^{2})=4.84$.
		\end{enumerate}
		   
	\end{lemma}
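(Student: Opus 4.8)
The plan is to reduce everything to $p$-groups. A finite nilpotent group $G$ is the internal direct product of its Sylow subgroups, and $\psi$ (hence $\OO$) is multiplicative on direct factors of coprime order: if $G=P_{1}\times\cdots\times P_{k}$ with $P_{i}$ the Sylow $p_{i}$-subgroup, then $\psi(G)=\prod_{i}\psi(P_{i})$ and so $\OO(G)=\prod_{i}\OO(P_{i})$. Thus the statement becomes a matter of bounding $\OO$ for a single $p$-group and multiplying.

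The key estimate I would establish first is a lower bound for the average order of a $p$-group $P$ of order $p^{n}$: every non-identity element has order divisible by $p$, so $\psi(P)\ge 1+(p^{n}-1)p$ and hence $\OO(P)\ge p-(p-1)/p^{n}$. In particular $\OO(P)\ge\OO(C_{p})=p-1+\tfrac1p$, with equality if and only if $P\cong C_{p}$; and if $|P|\ge p^{2}$ then $\OO(P)\ge\OO(C_{p}^{2})=p-(p-1)/p^{2}$, with equality if and only if $P\cong C_{p}^{2}$ (the unique group of order $p^{2}$ of exponent $p$; order $\ge p^{3}$ makes the bound strict). I would then record the values used below: $\OO(C_{2})=\tfrac32$, $\OO(C_{2}^{2})=\tfrac74$, $\OO(C_{3})=\tfrac73$, $\OO(C_{3}^{2})=\tfrac{25}{9}$, $\OO(C_{5})=\tfrac{21}{5}$, $\OO(C_{5}^{2})=\tfrac{121}{25}=4.84$, $\OO(C_{6})=3.5$, and $\OO(C_{7})=\tfrac{43}{7}$.

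For part (i), since $|\pi(G)|>1$, multiplying the per-prime bounds over the two smallest primes dividing $|G|$ gives $\OO(G)\ge\OO(C_{2})\OO(C_{3})=3.5$ when $\{2,3\}\subseteq\pi(G)$ and strictly more otherwise; equality forces $\pi(G)=\{2,3\}$ with both Sylow subgroups cyclic of prime order, i.e. $G\cong C_{6}$. If $G\ncong C_{6}$, I would run through the few minimal configurations: a Sylow subgroup properly containing $C_{p}$ (use the $\OO(C_{p}^{2})$ bound at $p$ together with $\OO(C_{q})$ at the other prime), a prime $\ge 5$ dividing $|G|$, or $|\pi(G)|\ge 3$; in each case a short product of the tabulated constants exceeds $4$, for instance $\OO(C_{2}^{2})\OO(C_{3})=\tfrac{49}{12}$, $\OO(C_{2})\OO(C_{3}^{2})=\tfrac{25}{6}$, $\OO(C_{2})\OO(C_{5})=\tfrac{63}{10}$, and $\OO(C_{2})\OO(C_{3})\OO(C_{5})=14.7$.

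For part (ii), $\pi(G)$ now consists of odd primes and $G$ is not a $3$-group. If $G$ is a $p$-group then $p\ge 5$; for $p=5$ the hypothesis $|G|>5$ forces $|G|\ge 25$, so $\OO(G)\ge\OO(C_{5}^{2})=4.84$, while $p\ge 7$ already gives $\OO(G)\ge\OO(C_{7})>4.84$. Otherwise at least two odd primes divide $|G|$ and $\OO(G)\ge\OO(C_{3})\OO(C_{5})=\tfrac{49}{5}>4.84$. The arguments are routine computations; the only points needing care are the equality analysis in the $p$-group bound (pinning down $C_{p}$, respectively $C_{p}^{2}$, as the unique minimizer via the exponent-$p$ condition) and noting that the hypothesis $|G|>5$ in (ii) is exactly what excludes $G\cong C_{5}$, whose average order $4.2$ would otherwise violate the bound, so I do not anticipate a real obstacle.
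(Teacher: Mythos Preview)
Your argument is correct and follows essentially the same route as the paper: both exploit the multiplicativity $\OO(G)=\prod_i\OO(P_i)$ for nilpotent $G$, reduce to lower bounds $\OO(P)\ge\OO(C_p)$ (respectively $\OO(C_p^2)$) for $p$-groups, and finish with a short case split on the primes involved. The only cosmetic differences are organizational---the paper treats the presence of a prime $p\ge 5$ (resp.\ $p\ge 7$ in (ii)) first via a quotient argument and writes $\OO(C_{15})$ where you write $\OO(C_3)\OO(C_5)$---so no separate comparison is needed.
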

	\begin{proof}
		($i$) By \cite[Lemma 1.1]{hlm}, if $\pi(G)=\{p_1,p_2,..., p_n\}$, then $\OO(G)=\prod\limits_{1\le i\le n}{o({{P}_{i}})}$, where $P_i\in \Syl_{p_i}(G)$. If $\sg$ has a prime divisor $p\ge5$, then $G$ has a quotient isomorphic to its Sylow $p$-subgroup, say P, and since $\OO(P)\ge \OO(C_{p})>4$, the statement holds. Hence, we may assume that $\pi(G) = \left\{2,3\right\}$. If $G\ncong C_{6}$, then $12\mid \sg$ or $18\mid|G|$. Therefore, $\OO(G)\geq \OO(C_2^2) \OO(C_3)> 4$.  
		
		($ii$)  If $\sg$ has a prime divisor $p\ge7$, then $G$ has a quotient isomorphic to its Sylow $p$-subgroup, say $P$, and since $\OO(P)\ge \OO(C_{p})>6>\OO(C_{5}^{2})$, the statement holds. Otherwise, $\pi(G)=\left\lbrace 5 \right\rbrace $, or $\pi(G)=\left\lbrace 3, 5 \right\rbrace $. In the first case, $G$ is a $5$-group and as $\sg>5$, the statement holds. In the second case, since $G$ has a factor, isomorphic to $C_{15}$, we have $\OO(G)\geq \OO(C_{15})=9.8>\OO(C_{5}^{2})$.
	\end{proof}

	\begin{lemma}\label{1,2}
		
		Let $G$ be a finite group and $H$ a subgroup of $G$.
		\begin{enumerate}[(i)]
			\item 	If $H$ is of index $2$, and $\OO(H)> 3.6$, then $\OO(G)> 2.8$. 
			\item	If $H$ is a normal subgroup of index $3$ and $\OO(H)> 2.4$, then $\OO(G)> 2.8$.
		\end{enumerate}
	\end{lemma}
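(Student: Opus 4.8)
Both parts follow from the same elementary observation, applied through Lemma~\ref{pri}. The plan is to exploit that an index-$n$ normal subgroup forces the cosets outside $H$ to consist of elements whose order is a multiple of the order of the corresponding element of $G/H$, and then to bound $\psi(G)$ from below by splitting the sum $\psi(G)=\sum_{x\in G}o(x)$ according to cosets of $H$.

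For part~(i): since $H$ has index $2$ it is normal and $G/H\cong C_2$. By Lemma~\ref{pri}(i), every $x\in G\setminus H$ satisfies $2=o(xH)\mid o(x)$, so $o(x)\ge 2$; since $|G\setminus H|=|H|$ this gives $\psi(G)\ge \psi(H)+2|H|$. Dividing by $|G|=2|H|$ yields
\[
\OO(G)=\frac{\psi(G)}{|G|}\ge \frac{\psi(H)}{2|H|}+1=\frac{\OO(H)}{2}+1 .
\]
(Equivalently one may invoke Lemma~\ref{pri}(ii) with $\OO(G/H)=\OO(C_2)=3/2$.) Hence $\OO(H)>3.6$ forces $\OO(G)>3.6/2+1=2.8$.

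For part~(ii): here $G/H\cong C_3$, so by Lemma~\ref{pri}(i) every $x\in G\setminus H$ has $3=o(xH)\mid o(x)$, i.e. $o(x)\ge 3$, and $|G\setminus H|=2|H|$. Thus $\psi(G)\ge \psi(H)+6|H|$, and dividing by $|G|=3|H|$ gives
\[
\OO(G)\ge \frac{\psi(H)}{3|H|}+2=\frac{\OO(H)}{3}+2
\]
(equivalently, Lemma~\ref{pri}(ii) with $\OO(G/H)=\OO(C_3)=7/3$). Therefore $\OO(H)>2.4$ yields $\OO(G)>2.4/3+2=2.8$.

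There is essentially no obstacle to overcome: the only inputs are Lemma~\ref{pri} and the two arithmetic facts $\OO(C_2)=3/2$, $\OO(C_3)=7/3$, and the strict inequalities in the hypotheses make any equality analysis unnecessary. The content of the lemma is just that these two bounds are exactly sharp enough to clear the threshold $2.8$, which is why the numerical values $3.6$ and $2.4$ appear in the statement.
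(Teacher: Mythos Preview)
Your proof is correct and is essentially identical to the paper's own argument: in both parts one bounds $\psi(G)\ge\psi(H)+2|H|$ (resp.\ $\psi(H)+6|H|$) using that elements outside $H$ have order divisible by $2$ (resp.\ $3$), divides by $|G|$, and reads off $\OO(G)\ge\OO(H)/2+1$ (resp.\ $\OO(H)/3+2$). The only cosmetic difference is that you explicitly invoke Lemma~\ref{pri}(i) and the values $\OO(C_2),\OO(C_3)$, whereas the paper states the coset inequality directly.
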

	\begin{proof}
		($i$) As $G=H\cup xH$, for some $x\in G\setminus H$, we have,  $\OO(G)\geq \frac{\psi(H)+2|H|}{2|H|}=\frac{\OO(H)}{2}+1> 2.8$. 	
		The proof of ($ii$) is similar to ($i$). By Lemma \ref{pri}, $\OO(G)\geq \frac{\psi(H)+6|H|}{3|H|}=\frac{\OO(H)}{3}+2> 2.8$, as wanted.  
	\end{proof}
	
	\begin{lemma}\label{FB}
		Let $M$ be a subgroup of index $2$ of a finite group $G$. If $n_{2}(G\setminus M)>2|M|/3=\sg/3$, then $M$ is nilpotent.
	\end{lemma}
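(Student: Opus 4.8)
The plan is to extract information from the hypothesis by fixing one involution in the non-identity coset of $M$ and studying the conjugation automorphism it induces on $M$. Since $[G:M]=2$ we have $M\trianglelefteq G$, and since $n_2(G\setminus M)>2|M|/3>0$ there is at least one involution $t\in G\setminus M$. Conjugation by $t$ restricts to an automorphism $\theta$ of $M$ with $\theta^2=\mathrm{id}_M$; because $t=t^{-1}$ we may write $\theta(m)=tmt$ for all $m\in M$.

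The crux of the argument is the following identification. The map $m\mapsto tm$ is a bijection from $M$ onto $G\setminus M=tM$, and for $m\in M$ the element $tm$ is never trivial, so it is an involution exactly when $(tm)^2=e$, i.e. $tmtm=e$, i.e. $tmt=m^{-1}$, i.e. $\theta(m)=m^{-1}$. Hence the number of elements of $M$ inverted by $\theta$ is precisely $n_2(G\setminus M)$, and so, in the notation of the introduction,
\[
r(M,\theta)=\frac{n_2(G\setminus M)}{|M|}>\frac{2|M|/3}{|M|}=\frac{2}{3}.
\]

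To finish, observe that for every prime $p$ we have $\tfrac{2}{p+1}\le\tfrac23$, with equality only when $p=2$; thus the strict inequality above yields $r(M,\theta)>\tfrac{2}{p+1}$ for every $p\in\pi(M)$. Applying Lemma \ref{Pot} to the automorphism $\theta$ of $M$, once for each prime $p\in\pi(M)$, we conclude that every Sylow subgroup of $M$ is normal in $M$, whence $M$ is nilpotent. The only step that requires any care is the exact correspondence between involutions outside $M$ and elements of $M$ inverted by $\theta$ (together with the borderline case $p=2$, which is covered by the strictness of the inequality); once that is in place the statement follows immediately from Lemma \ref{Pot}.
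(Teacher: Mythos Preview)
Your proof is correct and follows essentially the same approach as the paper: choose an involution $t\in G\setminus M$, observe that the conjugation automorphism $\theta=\tau_t$ of $M$ inverts exactly those $m$ with $tm$ an involution, deduce $r(M,\theta)>\tfrac{2}{3}\ge\tfrac{2}{p+1}$ for all primes $p$, and apply Lemma~\ref{Pot}. The only difference is that you spell out the bijection $m\mapsto tm$ and the borderline case $p=2$ a bit more explicitly than the paper does.
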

	\begin{proof}
		Let $x\in G\setminus M$ be an involution and $\tau_{x}\in \Aut(M)$, such that for each $m \in M$, $\tau_{x}(m)=m^{x}$. Note that $\tau_{x}$ maps $m\in M$ to its inverse if and only if $o(xm)=2$. Since $n_{2}(G\setminus M)>2|M|/3$, we get that $\tau_{x}$ maps more than $\frac{2}{3}$ of the elements of $M$ to their inverses. As, $r(M,\tau_{x})>\frac{2}{3}\ge \frac{2}{p+1}$, for any prime number $p$, by Lemma \ref{Pot}, $M$ is nilpotent.
	\end{proof}

	\section{main results}

	\begin{lemma}\label{ashkan}
		Let $G$ be a finite group and $m>1$ be an odd integer.
		\begin{enumerate}[(i)]
			\item If $|G|=2m$, then $\OO(G)\ge\OO(D(C_{5}\times C_{5}))=3.42$, unless $G\cong D(C_3^{k})$, for some integer $k$, or $G\cong D_{10}$.
			
			\item If $|G|=4m$, then $\OO(G)>3$, unless $G\cong A_4$ or $D_{12}$.
		\end{enumerate}
	\end{lemma}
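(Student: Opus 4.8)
The plan for part (i) is to take $N\trianglelefteq G$ of odd order $m=|G|/2$, fix an involution $x\in G\setminus N$, so that $G=N\rtimes\langle x\rangle$, and record that every $y\in xN$ has $y^{2}\in N$ of odd order, hence $y$ is an involution or $o(y)\geq 6$; this already gives $\OO(G)\geq \OO(N)/2+3-2n_{2}(G)/m$. I would then split on whether $N$ is abelian. If $N$ is abelian, the coprime decomposition $N=C_{N}(x)\oplus[N,x]$ makes $n\mapsto n\,n^{x}$ a homomorphism onto $C_{N}(x)$ whose kernel is the set of $x$‑inverted elements, and since $o(xn)=2\,o(n\,n^{x})$, summing over the fibres yields the exact identity $\OO(G)=\OO(N)/2+\OO(C_{N}(x))$. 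Thus $C_{N}(x)\neq1$ forces $\OO(G)\geq\OO(C_{3})/2+\OO(C_{3})=7/2$, while $C_{N}(x)=1$ means $G\cong D(N)$ with $\OO(D(N))=\OO(N)/2+1$; by Lemma \ref{nil}(ii) every abelian group of odd order $>1$ other than $C_{3}^{k}$ and $C_{5}$ has average order at least $\OO(C_{5}^{2})=4.84$, so the only exceptions to $\OO(G)\geq 3.42$ are $D(C_{3}^{k})$ and $D_{10}$.

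If $N$ is non‑abelian I would first establish the auxiliary fact that an automorphism of order $\leq 2$ of a non‑abelian $p$‑group ($p$ odd) inverts at most $|N|/p\leq|N|/3$ elements: in the class‑$2$ case the inverted set meets each coset of $N'$ in at most one point, and in general one passes to $N/\gamma_{c}(N)$ by induction on the nilpotency class. Consequently: if $N$ is a non‑abelian $3$‑group then $n_{2}(G)\leq m/3$ and, using $\psi(N)\geq 3|N|-2$, $\OO(G)\geq\OO(N)/2+7/3\geq(79/27)/2+7/3>3.42$; if $N$ is non‑abelian nilpotent but not a $3$‑group, the same fact applied to a non‑abelian Sylow subgroup again gives $n_{2}(G)\leq m/3$ while Lemma \ref{nil} gives $\OO(N)>4$, so $\OO(G)>2+7/3$; and if $N$ is non‑nilpotent then Lemma \ref{FB} (applicable since $n_{2}(G\setminus N)=n_{2}(G)$) forces $n_{2}(G)\leq 2m/3$, and $\OO(N)\geq\OO(C_{7}\rtimes C_{3})=85/21$ gives $\OO(G)\geq\OO(N)/2+5/3>3.42$.

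For part (ii), suppose $\OO(G)\leq 3$; since $\OO(A_{5})=211/60>3$ the group $G$ is solvable, and by Lemma \ref{nil}(i) it is not nilpotent. Writing $N=O_{2'}(G)$ and $\bar G=G/N$, we have $O_{2'}(\bar G)=1$, so $\Fit(\bar G)=O_{2}(\bar G)$ is a $2$‑group with $C_{\bar G}(\Fit(\bar G))\leq\Fit(\bar G)$, and using $|\Aut(C_{4})|=2$ and $|\Aut(C_{2}^{2})|=6$ one gets $\bar G\cong A_{4}$ or $\bar G$ has order $4$. In the latter case $G=N\rtimes P$ with $P\in\Syl_{2}(G)$ and $N\neq1$: if $P\cong C_{4}$ then Lemma \ref{pri}(ii) with $H=N$ gives $(\OO(N)+10)/4\leq\OO(G)\leq3$, so $\OO(N)\leq 2<\OO(C_{3})$, impossible. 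If $\bar G\cong A_{4}$ with $N=1$ then $G\cong A_{4}$; and if $\bar G\cong A_{4}$ with $N\neq1$ one reduces by induction on $|G|$ applied to the index‑$3$ subgroup $N\rtimes C_{2}^{2}$ lying over the normal $V_{4}\trianglelefteq A_{4}$ (the degenerate subcases producing only $A_{4}$ and, through a central‑$C_{3}$ extension of $A_{4}$, groups with $\OO>3$). This leaves $G=N\rtimes C_{2}^{2}$ with $N\neq1$ odd.

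In that last configuration let $M_{1},M_{2},M_{3}$ be the three index‑$2$ subgroups containing $N$; since each non‑involution of $M_{i}\setminus N$ has order $\geq 6$ and every involution of $G$ lies in exactly one $M_{i}$, one obtains $\OO(G)\geq\OO(N)/4+9/2-n_{2}(G)/m$. Lemma \ref{pri}(ii) with $H=N$ forces $\OO(N)\leq 6$, which restricts $N$ to $C_{3}^{k}$, $C_{5}^{k}$, $C_{7}\rtimes C_{3}$, or a non‑abelian $3$‑group; if the action of $C_{2}^{2}$ on $N$ is not faithful its kernel is a central involution, and quotienting it out reduces to a group of order $2m$, whence by part (i) and Lemma \ref{pri}(ii) we find $G\in\{C_{2}\times D(C_{3}^{k}),\,C_{2}\times D_{10}\}$, of which only $C_{2}\times D(C_{3})=D_{12}$ has $\OO\leq3$; and if the action is faithful then the inversion bound above, together with the remark that at most one involution of $C_{2}^{2}$ can act as $-I$ on an elementary abelian $N$, gives $n_{2}(G)<\psi(N)/4+3m/2$ for each $N$ on the list, i.e. $\OO(G)>3$. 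I expect the main obstacle to be exactly this control of $n_{2}(G)$ in the faithful $C_{2}^{2}$‑action case; the rest is bookkeeping around Lemmas \ref{pri}, \ref{FB}, \ref{nil} and part (i).
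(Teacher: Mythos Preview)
Your proposal for part (i) works in the abelian case---the identity $\OO(G)=\OO(N)/2+\OO(C_N(x))$ is elegant and correct---but the non-abelian case has real gaps. The auxiliary claim that an involutory automorphism of a non-abelian odd $p$-group inverts at most $|N|/p$ elements is not proved (your class-$2$ sketch needs to show $\theta$ fixes $N'$ pointwise before the ``one per coset'' argument goes through, and the induction on nilpotency class is only gestured at). More seriously, the assertion $\OO(N)\ge\OO(C_7\rtimes C_3)=85/21$ for every non-nilpotent group of odd order is simply unjustified; the trivial bound $\OO(N)\ge 3-2/|N|$ gives only about $2.9$ for $|N|=21$, which with $n_2\le 2m/3$ yields $\OO(G)\gtrsim 3.12$, short of $3.42$. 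The paper sidesteps all of this with a single observation: by \cite[Lemma~10.4.1]{extra} the number of $x$-inverted elements of $N$ \emph{divides} $|N|$, and since $|N|$ is odd this forces either $n_2(xN)=|N|$ (so $x$ inverts all of $N$, whence $N$ is abelian and $G\cong D(N)$) or $n_2(xN)\le|N|/3$ (whence $\OO(G)\ge\OO(N)/2+7/3\ge 3.5$ immediately). This dichotomy eliminates the abelian/non-abelian split and all the auxiliary lemmas.

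For part (ii) your structural route via $O_{2'}(G)$ and the Fitting subgroup of $G/O_{2'}(G)$ is genuinely different from the paper's, but it is incomplete in the place you flagged and also earlier. Your list of odd-order $N$ with $\OO(N)\le 6$ is not exhaustive: for instance the Frobenius group $C_7^2\rtimes C_3$ has $\OO=631/147\approx 4.29$, and there are further non-nilpotent odd-order groups (and many non-abelian $3$-groups) below $6$ that you would have to treat. Even granting the list, the ``faithful $C_2^2$-action'' endgame requires a case-by-case inversion count that you do not carry out. The paper instead runs a short induction on $m$: take a maximal normal subgroup of prime index $p$; if $p\ge 5$ one is done by $\OO(C_p)>3$; if $p=3$ apply the inductive bound to the index-$3$ subgroup; if $p=2$ there is a normal $2$-complement $K$, and one quotients by a minimal normal $K_0\le K$---either $|G/K_0|\ge 20$ and induction applies, or $|G/K_0|=12$ and a brief argument (using Lemmas~\ref{FB} and~\ref{nil}) handles the $D_{12}$ quotient, or $K_0=K$ and Lemma~\ref{strkey} finishes. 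The only ad hoc step is checking $|G|=36$ directly. This avoids classifying the odd-order kernel altogether.
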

	\begin{proof}
		($i$) Let  $N$ be  the Hall $2'$-subgroup of $G$, and $G=N\cup xN$, for some involution $x\in G\setminus N$. Note that $N$ is a group of odd order, so by \cite[Lemma 1.1]{hlm}, $\OO(N)\ge\OO(C_3)=\frac{7}{3}$. Note that for any $n\in N$, $o(xn)=2$ if and only if $n^{x}=n^{-1}$. Hence, by \cite[Lemma 10.4.1]{extra}, $n_{2}(xN)=n_{2}(G\setminus N)$ is a divisor of $|N|$.
		If $n_2(G\setminus N)\le \frac{|N|}{3}=\frac{\sg}{6}$, then	$\psi(xN)\ge 2 n_{2}(xN)+6(|N|-n_{2}(xN))\ge 2\frac{\sg}{6}+6 \frac{\sg}{3}=\frac{7}{3}\sg$. Thus,  $\OO(G)\ge \frac{\OO{(N)}}{2}+\frac{7}{3}\ge \frac{7}{6}+\frac{7}{3}=3.5$. If $n_2(G\setminus N)> \frac{|N|}{3}$, we see that $n_{2}(xN)=|N|$. So $\langle x \rangle$ acts Frobeniusly on $N$. Therefore, $N$ is abelian. First assume that $N$ is a 3-group.
		If exp($N$)$>3$, then $N$ has a quotient isomorphic to $C_9$, and so $\OO(N)\ge\OO(C_9)>6$. Hence, $\OO(G)= \frac{\OO(N)}{2}+1> 3+1=4$. Otherwise, exp($N$)$=3$, and so $G\cong D(C_3^{k})$, for some $k$, as wanted.
		If $N$ is not a $3$-group and $m>5$, then by Lemma \ref{nil}($ii$), $\OO(N)\ge\OO(C_{5}\times C_{5})=4.84$, and so $\OO(G)=\frac{\OO(N)}{2}+1\ge 2.42+1=3.42$. Finally, for $m=5$, $G\cong D_{10}$, and $\OO(D_{10})=3.1$, as desired.
		
		\bigskip
		($ii$)	Let $Q \in$ Syl$_2(G)$. If $Q$ is cyclic, then $G\cong N \rtimes Q$, where $N$ is the  Hall $2'$-subgroup of $G$. Since $N$ is of odd order, $\OO(N)\ge\OO(C_3)=\frac{7}{3}$. By Lemma \ref{pri}, $\OO(G) \ge \frac{\OO(N)}{4}+\frac{10}{4}\ge \frac{7}{12}+\frac{10}{4}=\frac{37}{12}>3$.
		Now,  assume that $Q$ is not cyclic. Let $m=3$. For the nilpotent groups of order 12, by Lemma \ref{nil}, $\OO(G)>3$. Looking at the non-nilpotent groups of order $12$ with non-cyclic Sylow $2$-subgroups, we see that the only possibilities are $D_{12}$ and $A_4$.  From now on,  we assume that $m\ge5$, and by induction on $m$, we prove that $\OO(G)>3$.
		
		Note that by \cite[Theorem C]{hlm}, if $G$ is non-solvable, then $\OO(G)>\OO(A_{5})>3.5$, and the result holds. If $G$ is solvable, there exists a maximal normal subgroup $M$ of index $p$, for some prime $p$. From the fact that $p-1+\frac{1}{p}=\OO(C_p)=\OO(\frac{G}{M})<\OO(G)$, if $p\ge5$, we get that $\OO(G)>3$. In the sequel, we consider $p\in \left\{2, 3\right\}$.
		Checking by \textbf{GAP}, we get that the statement holds for groups of order $36$ and so in the following we may assume that $|G|\ne36$. Now we consider the following cases.
		
		\bigskip
		
		{\bf 	Case 1)} If  $p=2$, then $G$ has a normal $2$-complement, say $K$.
		Let $K_0 \le K$ be a minimal normal subgroup of $G$. If $K_{0}<K$, we have $|G:K_{0}|=4m'$, where $m'\ge 3$. If $m'\ge 5$, by the induction hypothesis, $\OO(G)>\OO(G/K_{0})>3$. Let $m'=3$. So, $|G:K_{0}|=12$. Note that since $A_{4}$ does not have a normal $2$-complement, $G/K_{0}\ncong A_{4}$. If $G/K_{0}\ncong D_{12}$, then by the discussion we had for groups of order $12$, $\OO(G)>\OO(G/K_{0})>3$. Otherwise, $G/K_{0}\cong D_{12}$. Let $M$ be a subgroup of $G$ such that $M/K_{0}\cong C_{6}$. So,  $\OO(M)> \OO(C_6)=3.5$. If $n_2(G\setminus M)\leq |G|/3$, then $\OO(G)\geq\frac{ \psi(M)+ 2|G|/3+6|G|/6}{|G|}\geq \frac{7}{4}+\frac{5}{3}>3 $. Otherwise, $n_2(G\setminus M)> |G|/3$, then by Lemma \ref{FB}, $M$ is nilpotent.   Thus, by  Lemma \ref{nil}($i$),  $\OO(M)>4$. Therefore, $\OO(G)\ge\frac{\OO(M)}{2}+1>3$, as desired. 
		Now assume that $K_{0}=K$.  Therefore, by Lemma \ref{strkey}, $G\cong D(C_{2q})$ or $G\cong C_{2q}\times C_2$, where $q>3$. In the first case, $\OO(G)=\frac{\OO(C_{q}\times C_{2})}{2}+1>3$. In the latter case, by Lemma \ref{nil}($i$), $\OO(G)>3$.
		
		\bigskip
		
		{\bf 	Case 2)} If $p=3$, then $G=M\cup xM\cup x^2M$, for some $x\in G\setminus M$. On the other hand, by the induction hypothesis, $\OO(M)> 3$. So, $\OO(G)\ge \frac{\OO(M)}{3}+2>3$.
		
	\end{proof}

	\begin{lemma}\label{Stipe}
		Let $G$ be a finite group with $|\pi(G)|\ge2$. Then $\OO(G)\ge\OO(S_{3})=\frac{13}{6}$, and equality holds if and only if $G\cong S_{3}$.
	\end{lemma}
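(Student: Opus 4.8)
The plan is to argue by induction on $|G|$ (equivalently, to consider a minimal counterexample), where the statement to be proved reads: if $|\pi(G)|\ge 2$ then $\OO(G)\ge\tfrac{13}{6}$, with equality only for $G\cong S_3$. First I would dispatch the non-solvable case immediately: by \cite[Theorem C]{hlm} a non-solvable group satisfies $\OO(G)>\OO(A_5)=\tfrac{211}{60}>\tfrac{13}{6}$, so $G$ may be assumed solvable. Then $G$ has a maximal normal subgroup $M$ with $G/M\cong C_p$ for some prime $p$; if $p\ge 3$, Lemma \ref{pri}(ii) gives $\OO(G)>\OO(C_p)\ge\OO(C_3)=\tfrac 73>\tfrac{13}{6}$, a contradiction. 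Hence every maximal normal subgroup of $G$ has index $2$, and I fix one such $M$, so $|M|=|G|/2$.

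Next I would split according to the $2$-part of $|G|$, invoking the two halves of Lemma \ref{ashkan} in the low ranges. If $4\nmid|G|$, then $|G|=2m$ with $m$ odd and $m>1$ (because $|\pi(G)|\ge 2$), so Lemma \ref{ashkan}(i) gives $\OO(G)\ge 3.42$ unless $G\cong D_{10}$ or $G\cong D(C_3^k)$; here $\OO(D_{10})=\tfrac{31}{10}$ and $\OO(D(C_3^k))=\tfrac 52-\tfrac1{3^k}$, which equals $\tfrac{13}{6}$ exactly when $k=1$, i.e. $G\cong D(C_3)\cong S_3$, and is larger otherwise, so the statement holds in this range. If $|G|=4m$ with $m$ odd, Lemma \ref{ashkan}(ii) gives $\OO(G)>3$ unless $G\cong A_4$ or $G\cong D_{12}$, and $\OO(A_4)=\tfrac{31}{12}$, $\OO(D_{12})=\tfrac{11}{4}$ are both above $\tfrac{13}{6}$, so again we are done.

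It remains to treat the case $8\mid|G|$. Then $|M|=|G|/2\ge 12$, and $|M|$ is divisible both by $2$ and by an odd prime, so $|\pi(M)|\ge 2$ and $M\not\cong C_6$; by the inductive hypothesis $\OO(M)\ge\tfrac{13}{6}$. If $M$ is nilpotent, Lemma \ref{nil}(i) gives $\OO(M)>4$, hence $\OO(G)\ge\tfrac{\psi(M)+2|M|}{2|M|}=\tfrac{\OO(M)}{2}+1>3$; so I may assume $M$ is not nilpotent. Then Lemma \ref{FB} forces $n_2(G\setminus M)\le |G|/3=\tfrac 23|M|$. The key point is that every element of $G\setminus M$ maps to the involution of $G/M\cong C_2$, hence has even order, so each non-involution there has order at least $4$; therefore $\psi(G\setminus M)\ge 2\,n_2(G\setminus M)+4\bigl(|M|-n_2(G\setminus M)\bigr)=4|M|-2\,n_2(G\setminus M)\ge\tfrac 83|M|$, and consequently $\OO(G)\ge\tfrac{\OO(M)}{2}+\tfrac 43\ge\tfrac{13}{12}+\tfrac 43=\tfrac{29}{12}>\tfrac{13}{6}$, the desired contradiction.

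The hard part is precisely this last case: the crude estimate $\OO(G)\ge\tfrac{\OO(M)}{2}+1$ together with the inductive bound $\OO(M)\ge\tfrac{13}{6}$ yields only $\tfrac{25}{12}<\tfrac{13}{6}$, so one genuinely needs the parity of element orders outside $M$ and the control on the number of involutions provided by Lemma \ref{FB} (with the nilpotent sub-case peeled off separately) to push past $\tfrac{13}{6}$. Everything else amounts to bookkeeping with quotients and with the average orders of the few small groups $S_3$, $D_{10}$, $D(C_3^k)$, $A_4$, $D_{12}$ and $A_5$.
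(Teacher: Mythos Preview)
Your argument is correct, and there is no circularity: Lemma~\ref{ashkan} is proved in the paper before Lemma~\ref{Stipe} and does not invoke it. The case split on the $2$-part of $|G|$, the induction passing to $M$ of index $2$, and the endgame using Lemma~\ref{FB} to bound $n_2(G\setminus M)$ together with parity of orders outside $M$ all go through exactly as you say.

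However, the paper's proof is much shorter and more self-contained, avoiding both induction and Lemma~\ref{ashkan}. After checking $|G|\le 10$ directly, it handles odd $|G|$ via $\exp(G)\ge 3$, giving $\OO(G)\ge 3-2/|G|$. For even $|G|$ it applies Potter's theorem (Lemma~\ref{Pot}) to the identity automorphism: if $n_2(G)>\tfrac23|G|-1$ then the Sylow $2$-subgroup is normal of index less than $3/2$, forcing $G$ to be a $2$-group, contrary to $|\pi(G)|\ge 2$. Hence $n_2(G)\le\tfrac23|G|-1$, and the crude estimate $\psi(G)\ge 1+2n_2(G)+3(|G|-n_2(G)-1)\ge\tfrac73|G|-1$ yields $\OO(G)\ge\tfrac73-\tfrac{1}{|G|}>\tfrac{13}{6}$ for $|G|>6$. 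So the whole proof is one elementary counting argument.

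What your route buys is nothing extra here, since Lemma~\ref{ashkan} itself is a page long and even appeals to \textbf{GAP} for order $36$. That said, the mechanism you use in the $8\mid|G|$ case---splitting off the nilpotent sub-case via Lemma~\ref{nil}, then bounding $n_2(G\setminus M)$ by the contrapositive of Lemma~\ref{FB} and exploiting that every element outside $M$ has even order---is exactly the technique the paper deploys later, in Lemma~\ref{less than 2.4} and in the Main Theorem. You have identified the right toolkit; it is simply overkill for this particular lemma, where a direct global bound on $n_2(G)$ suffices.
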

	\begin{proof}
		For the groups of order $6$, the statement holds. So we may assume that $\sg\ge10$. If $G$ is of odd order, as exp($G$)$\ge 3$, $\OO(G)\ge3-\frac{2}{\sg}>2.86>\frac{13}{6}$. Now assume that $G$ is of even order. If $n_{2}(G)>\frac{2}{3}\sg-1$, then since $r(G,1_{G})>\frac{2}{3}$, by Lemma \ref{Pot}, the Sylow $2$-subgroup of $G$, say $P$, is normal in $G$. Now because $|G:P|<\frac{\sg}{n_{2}(G)+1}<\frac{3}{2}$, $G$ is a $2$-group, a contradiction. Hence, $n_{2}(G)\le\frac{2}{3}\sg-1$. As, $\psi(G)\ge 1+2\nn+3(\sg-\nn-1)\ge\frac{7}{3}\sg-1$, we have $\OO(G)\ge\frac{7}{3}-\frac{1}{\sg}>\OO(S_{3})$, since $\sg\ge10$.
	\end{proof}
	
	Previous lemma implies that $\OO(G)\le S_{3}$, leads to $G\cong S_{3}$, or $G$ is a $2$-group. Now by \cite[Corollary]{new}, we get that $G$ is an elementary abelian $2$-group, if $G \ncong S_{3}$, which is a new proof for \cite[Theroem A]{hlm}.

	\bigskip
	
	The following lemma is obtained by \cite[Theorem 1.2]{t}, but we prove it  without referring  to that result.

	\begin{lemma}\label{less than 2.4}
		If $\OO(G)<2.4$, then $G$ is isomorphic to one of the following groups:
		\begin{enumerate}[(i)]
			\item $C_{3}$;
			\item $D_{8}$;
			\item $S_{3}$;
			\item $D(C_{3}\times C_{3})$;
			\item an elementary abelian $2$-group.
		\end{enumerate}
	\end{lemma}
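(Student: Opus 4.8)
The plan is to prove the statement by strong induction on $|G|$, using that by Lemma~\ref{pri}(ii) every proper quotient of $G$ again has $\OO(\cdot)<2.4$, so the inductive hypothesis applies to all quotients. I would split into the cases $|\pi(G)|=1$ and $|\pi(G)|\ge 2$.

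Assume first $|\pi(G)|=1$, so $G$ is a $p$-group. If $p\ge 5$ then $\OO(G)\ge\OO(C_p)\ge\OO(C_5)=21/5>2.4$, impossible. If $p=3$ and $|G|\ge 9$, then $G$ has a quotient of order $9$, hence $\OO(G)>\OO(C_3\times C_3)=25/9>2.4$; so $G\cong C_3$, which is case~(i). If $p=2$ and $G$ is elementary abelian we are in case~(v); otherwise pick a normal subgroup $N$ maximal with $G/N$ not elementary abelian, so that $G/N$ is \emph{minimal} non-elementary-abelian. A short argument shows $\Phi(G/N)$ is then the unique minimal normal subgroup of $G/N$, forcing $\Center(G/N)$ cyclic, $|(G/N)'|\le 2$ and $\exp(G/N)=4$; running through the possibilities (using that $x\mapsto x^2$ defines a quadratic form $G/N\to (G/N)'$) identifies $G/N$ as $C_4$, an extraspecial $2$-group, or an almost-extraspecial group $C_4\circ 2^{1+2n}$, and computing $\psi$ in each family shows that $\OO(G/N)<2.4$ holds only for $G/N\cong D_8$. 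If $N\ne 1$ then, since $N$ contains an involution, estimating element orders along the eight cosets of $N$ gives $\psi(G)\ge(2|N|-1)+2\cdot 5|N|+4\cdot 2|N|=20|N|-1$, so $\OO(G)\ge\tfrac52-\tfrac1{8|N|}\ge 2.4375>2.4$; hence $N=1$ and $G\cong D_8$ (case~(ii)).

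Now assume $|\pi(G)|\ge 2$. By Lemma~\ref{nil}(i), $G$ is not nilpotent, and since $2.4<\OO(A_5)$, $G$ is solvable by \cite[Theorem C]{hlm}; also $\OO(G)\ge\OO(S_3)=13/6$ by Lemma~\ref{Stipe}. Write $|G|=2^am$ with $m$ odd. If $a=1$, Lemma~\ref{ashkan}(i) leaves only $G\cong D(C_3^k)$ (as $\OO(D_{10})=3.1>2.4$), and $\OO(D(C_3^k))=\tfrac52-3^{-k}<2.4$ exactly for $k\le 2$, giving $G\cong S_3$ (case~(iii)) or $G\cong D(C_3\times C_3)$ (case~(iv)). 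If $a=2$, Lemma~\ref{ashkan}(ii) gives $\OO(G)>3$ unless $G\in\{A_4,D_{12}\}$, both of which have $\OO>2.4$, so this case is empty. It remains to rule out $a\ge 3$, i.e.\ $8\mid|G|$: here I would take a minimal normal subgroup $N\cong C_p^b$, so that by induction $G/N\in\{C_3,\,D_8,\,S_3,\,D(C_3\times C_3),\,C_2^n\}$, and dispose of each case. If $p=2$ and $G/N\in\{D_8,C_2^n\}$ then $G$ is a $2$-group, contradicting $|\pi(G)|\ge 2$; if $G/N=C_3$ then irreducibility of $N$ forces $b\le 2$, contradicting $8\mid|G|$; if $G/N\in\{S_3,D(C_3\times C_3)\}$ one analyses $C_G(N)$, using that an abelian group acting irreducibly over $\mathbb F_2$ does so through a cyclic quotient, that $S_3\cong\GL_2(\mathbb F_2)$, and that $D(C_3\times C_3)$ has no faithful irreducible $\mathbb F_2$-module --- this pins $G$ down to a central extension of $G/N$ by $N$ (excluded by the estimates of Lemma~\ref{pri}(ii) and Lemma~\ref{1,2}), to $S_4$ (where $\OO=67/24>2.4$), or to one explicit group of order $72$ (where $\OO>2.4$ by Lemma~\ref{pri}(ii)). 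If $p$ is odd, Schur--Zassenhaus gives $G=N\rtimes(G/N)$ with a non-trivial irreducible action, whence $G/N\in\{D_8,C_2^n\}$ and, in the latter case, $b=1$ and $G\cong C_2^{n-1}\times D_{2p}$ (for which a direct $\psi$-computation gives $\OO(G)\ge 73/24>2.4$), while in the former case the estimate $\psi(G)\ge\psi(N)+18\,|N|$ forces $\OO(G)>2.4$. In every subcase $\OO(G)\ge 2.4$, a contradiction, so $8\nmid|G|$ and the classification is complete. (The finitely many small orders lying below the above estimates --- essentially $|G|\le 24$ --- may, if preferred, be verified directly with GAP.)

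The step I expect to be the main obstacle is the case $|\pi(G)|\ge 2$ with $8\mid|G|$: although each subcase is individually elementary, one must carry out the Clifford-theoretic analysis of the semidirect products $N\rtimes(G/N)$ with some care, and the naive bounds coming from Lemma~\ref{pri}(ii) are repeatedly just short of $2.4$, so the sharper direct computations of $\psi$ (and possibly a finite computer check for the smallest surviving orders) seem unavoidable. The $p=2$ prime-power case is the other delicate point, since it relies on correctly identifying the minimal non-elementary-abelian $2$-groups and their average orders.
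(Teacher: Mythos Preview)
Your argument is essentially correct, but it takes a genuinely different route from the paper's.

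For non-elementary-abelian $2$-groups, the paper simply invokes Wall's bound $n_2(G)\le \tfrac34|G|-1$ (cited as \cite[Corollary]{new}) to obtain $\OO(G)\ge 2.5-1/|G|$, which immediately forces $|G|\le 8$ and hence $G\cong D_8$. You instead classify the minimal non-elementary-abelian $2$-groups via the quadratic-form argument and then run a coset estimate; this is correct and self-contained, but considerably longer than the one-line appeal to Wall.

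For the mixed-order case $|\pi(G)|\ge 2$, the paper does not use induction at all. It first applies Potter's theorem (Lemma~\ref{Pot}) with $\theta=1_G$: if some odd Sylow subgroup were non-normal, the resulting bound on $n_2(G)$ already gives $\OO(G)\ge 2.5-1/|G|>2.4$. Thus $G$ has a normal Hall $2'$-subgroup. When $|Q|\le 4$ the paper, like you, cites Lemma~\ref{ashkan}. When $|Q|\ge 8$ the paper takes any index-$2$ subgroup $M$ and applies the dichotomy on $n_2(G\setminus M)$ together with Lemma~\ref{FB} and Lemma~\ref{nil}(i), finishing in two lines. Your treatment of $|Q|\ge 8$ via a minimal normal subgroup and a Clifford-theoretic case analysis of the possible $G/N$ works, but replaces a short counting argument by several structural subcases (including the explicit order-$72$ group and the $D_8$- and $C_2^n$-complement computations).

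In summary, both proofs are valid; the paper's is markedly shorter because it leans on the external results of Wall and Potter (the latter through Lemma~\ref{FB}), whereas your approach trades those citations for an inductive, module-theoretic argument that is more hands-on but heavier in casework. Your own assessment of the ``main obstacle'' is accurate: the $8\mid|G|$ subcase is where your method pays the price that the paper avoids.
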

	\begin{proof}
		
		We note that the result holds when $\sg\le10$. So we may assume that $\sg\ge11$. If $\sg$ is odd, as $exp(G)\ge3$, it is easy to check that $\OO(G)\ge3-2/\sg>\OO(C_{3}\times C_{3})>2.7$, a contradiction. Now assume that $\sg$ is even. Obviously the statement holds for elementary abelian $2$-groups and if $G$ is a $2$-group which is not elementary abelian, then by \cite[Corollary]{new}, $\nn\le \frac{3}{4}\sg-1$. Therefore, $\OO(G)\ge\frac{1+2\nn+4(\sg-\nn-1)}{\sg}=2.5-\frac{1}{\sg}$, and since $\OO(G)<2.4$, we have $\sg\le8$,  a contradiction. In the sequel, suppose that $G$ is not a $2$-group. If there exists a non-normal Sylow $p$-subgroup $P$ of $G$, for some odd prime $p$, then by considering $1_{G} \in \Aut(G)$ in Lemma \ref{Pot}, we have $r(G,1_{G})\le\frac{2}{p+1}$. It follows that $\nn\le\frac{2}{p+1}\sg-1$. So, $\psi(G)\ge 1 + 2\nn+3(\sg-\nn-1)>(3-\frac{2}{p+1})\sg-1$. Therefore, $\OO(G)\ge 2.5-\frac{1}{\sg}>2.4$, since $p\ge3$ and $\sg\ge11$, a contradiction. Therefore, $G\cong H\rtimes Q$, where $H$ is the Hall $2'$-subgroup of $G$, and $Q$ is a Sylow $2$-subgroup of $G$. Note that as we discussed in Lemma \ref{ashkan}, if $|Q|\le4$, then $G \cong D(C_{3}\times C_{3})$, as desired. Let $|Q|\ge8$ and $M$ be a subgroup of index 2 of $G$. If $n_{2}(G\setminus M)\le\frac{\sg}{3}$, as it was discussed multiple times, $\OO(G)\ge\frac{\OO(M)}{2}+\frac{4}{3}$. Since $|M|$ has at least two prime divisors, by Lemma \ref{Stipe}, $\OO(M)>\frac{13}{6}$, and so $\OO(G)>2.4$, which is a contradiction. Therefore, $n_{2}(G\setminus M)>\frac{\sg}{3}$, and by Lemma \ref{FB}, it follows that $M$ is nilpotent. Now Lemma \ref{nil}($i$) implies that $\OO(M)> 4$. Hence, $\OO(G)\ge\frac{\OO(M)}{2}+1>3 $, a contradiction.

	\end{proof}
	
	Note that the previous lemma implies that $\OO(G)<2$ if and only if $G$ is elementary abelian.
	\begin{remark}
	Throughout this paper, for simplicity we say $G$ is a $\star$-group, if $G$ is isomorphic to one of the following groups:
	\begin{enumerate}[\itshape(i)]
		\item $D_{8}\times D_{8}$;
		\item $D(A)$, where $A$ is an abelian $2$-group;
		\item $H(r)$, for some integer $r$;
		\item  $S(r)$, for some integer $r$.
	\end{enumerate}
\end{remark}

\begin{lemma}\label{2-groups}
	Let $G$ be a $2$-group with $\OO(G)<2.8$. Then $\nn>\frac{3}{5}\sg-2$, if $\sg\ge16$.
\end{lemma}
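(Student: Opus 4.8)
The plan is to use only the crude fact that in a $2$-group every element order is a power of $2$, so a non-identity element is either an involution or has order divisible by $4$. This alone forces a lower bound on $\psi(G)$ in terms of $\nn$, and combined with $\OO(G)<2.8$ it yields the claimed inequality. In particular no structure theorem for $2$-groups with few involutions (such as \cite[Corollary]{new}) is needed here; that tool was invoked earlier only for the opposite task of bounding $\nn$ from above.

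First dispose of the case $\exp(G)=2$: then $G$ is elementary abelian, $\nn=\sg-1$, and the inequality is immediate. So assume $\exp(G)\ge4$. Split $G$ into $\{e\}$, the $\nn$ involutions, and the remaining $\sg-1-\nn$ elements, each of order at least $4$. This gives
\[
\psi(G)\ \ge\ 1+2\nn+4\bigl(\sg-1-\nn\bigr)\ =\ 4\sg-2\nn-3 ,
\]
hence $\OO(G)\ge 4-\dfrac{2\nn+3}{\sg}$. From $\OO(G)<2.8=\tfrac{14}{5}$ we then get $2\nn+3>\tfrac{6}{5}\sg$, i.e. $\nn>\tfrac{3}{5}\sg-\tfrac{3}{2}$.

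Since $\tfrac{3}{5}\sg-\tfrac{3}{2}>\tfrac{3}{5}\sg-2$, this is actually stronger than the asserted bound, which therefore holds for every $2$-group; the hypothesis $\sg\ge16$ costs nothing and merely records the range in which the estimate will later be applied. There is thus essentially no obstacle in this statement: the only point worth isolating is that a non-identity, non-involution element of a $2$-group contributes at least $4$ --- not merely $3$ --- to $\psi(G)$, which is exactly what distinguishes $2$-groups from arbitrary groups in estimates of this kind.
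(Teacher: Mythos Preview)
Your proof is correct and follows essentially the same approach as the paper's: both rest on the single estimate $\psi(G)\ge 1+2\nn+4(\sg-\nn-1)$, the paper phrasing it as a contradiction from $\nn\le\tfrac{3}{5}\sg-2$ (yielding $\OO(G)\ge 2.8+1/\sg$) while you argue directly and obtain the marginally sharper $\nn>\tfrac{3}{5}\sg-\tfrac{3}{2}$. Your observation that the hypothesis $\sg\ge16$ is not actually used in the argument is also correct.
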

\begin{proof}
	On the contrary assume that $\nn\le\frac{3}{5}\sg-2$. So, $\OO(G)\ge\frac{1+2\nn+4(\sg-\nn-1)}{\sg}\ge2.8+1/\sg>2.8$, a contradiction.
\end{proof}

\begin{lemma}\label{the 2-groups}
	Let $G$ be a $\star$-group, and $\OO(G)<2.8$. Then $G$ is isomorphic to one of the following groups:
	\begin{enumerate}[(i)]
		\item $H(2)$;
		\item $S(2)$;
		\item $D(C_{4}\times C_{4})$;
		\item $D(C_{4}\times C_{2}^{k})$, for some integer $k\ge0$.
	\end{enumerate}
\end{lemma}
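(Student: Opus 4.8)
The plan is to run through the four families in the definition of a $\star$-group one at a time, in each case writing $\OO(G)$ as an explicit function of the defining parameters and then solving $\OO(G)<2.8$. The uniform simplification is that every $\star$-group other than $D(A)$ with $\exp(A)\ge 8$ has exponent dividing $4$, so $\psi(G)=1+2n_{2}(G)+4\bigl(|G|-1-n_{2}(G)\bigr)$ and hence
\[
\OO(G)=4-\frac{3+2n_{2}(G)}{|G|}.
\]
Thus $\OO(G)<2.8$ is equivalent to $n_{2}(G)>\tfrac{3}{5}|G|-\tfrac{3}{2}$ (compare Lemma \ref{2-groups}), and in each case the task reduces to counting involutions.

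For the single group $D_{8}\times D_{8}$ (order $64$) a direct count gives $n_{2}=35$, whence $\OO(D_{8}\times D_{8})=183/64>2.8$, so this family contributes nothing. For $D(A)$ I would first record that every element of $D(A)\setminus A$ is an involution, so $\psi(D(A))=\psi(A)+2|A|$ and
\[
\OO\bigl(D(A)\bigr)=\tfrac{1}{2}\OO(A)+1,
\]
turning the hypothesis into $\OO(A)<3.6$. A finite abelian $2$-group maps onto a cyclic group of order $\exp(A)$, and $\OO(C_{8})=43/8>3.6$, so Lemma \ref{pri}(ii) forbids $\exp(A)\ge 8$; if $\exp(A)\le 2$ then $D(A)$ is elementary abelian, a case treated earlier. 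Thus $A\cong C_{4}^{a}\times C_{2}^{b}$ with $a\ge 1$, and since $A$ has exactly $2^{a+b}$ elements of order at most $2$ one gets $\OO(A)=4-2^{1-a}-2^{-(2a+b)}$; the inequality $\OO(A)<3.6$ then forces $a\le 2$, leaving exactly the generalized dihedral groups $D(C_{4}\times C_{2}^{k})$ and $D(C_{4}\times C_{4})\times C_{2}^{k}=D(C_{4}\times C_{4}\times C_{2}^{k})$ with $k\ge 0$.

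For $H(r)$ and $S(r)$ the crucial point is that both have order $2^{2r+1}$ and the same involution count $n_{2}=2^{2r}+2^{r}-1$ (equivalently $n_{4}=2^{2r}-2^{r}$), hence the same average order
\[
\OO(H(r))=\OO(S(r))=3-2^{-r}-2^{-(2r+1)},
\]
which is $<2.8$ exactly when $r\le 2$; this produces $D_{8}=H(1)=S(1)$ (already on the list as $D(C_{4})$) together with $H(2)$ and $S(2)$. For $H(r)$, extraspecial of order $2^{1+2r}$ and $+$-type, the involution count is standard: $x\mapsto x^{2}$ induces a $+$-type quadratic form on $H(r)/\Center(H(r))$ with $2^{2r-1}+2^{r-1}$ zeros, and $x$ is an involution precisely when its image is a nonzero isotropic vector. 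For $S(r)$ I would argue directly: $y_{1},\dots,y_{r}$ are central, so every element has a normal form $c^{\varepsilon}\prod_{i}x_{i}^{a_{i}}\prod_{i}y_{i}^{b_{i}}$; if $\varepsilon=0$ it is a product of commuting involutions and squares to $1$, while if $\varepsilon=1$ a short computation using $x_{i}^{c}=x_{i}y_{i}$ shows its square equals $\prod_{i}y_{i}^{a_{i}}$, which is trivial exactly for the $2^{r}$ elements $c\prod_{i}y_{i}^{b_{i}}$. Combining the two subcases yields the claimed counts.

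The only genuinely non-routine step is this last structural analysis of $S(r)$: identifying its centre $\langle y_{1},\dots,y_{r}\rangle$, reducing every element to normal form, and deriving the square formula $\bigl(c\prod_{i}x_{i}^{a_{i}}\prod_{i}y_{i}^{b_{i}}\bigr)^{2}=\prod_{i}y_{i}^{a_{i}}$. Once that is in hand, everything else is elementary: the $D(A)$ analysis rests only on $\OO(D(A))=\tfrac{1}{2}\OO(A)+1$ together with the classification of finite abelian $2$-groups of average order below $3.6$, the $H(r)$ analysis on the standard description of extraspecial $2$-groups, and $D_{8}\times D_{8}$ on a one-off computation.
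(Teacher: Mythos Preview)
Your argument follows the paper's proof essentially line for line: handle the four $\star$-families separately, dispose of $D_{8}\times D_{8}$ by direct computation, reduce the $D(A)$ case via $\OO(D(A))=\tfrac{1}{2}\OO(A)+1$ to $\OO(A)<3.6$, and for $H(r)$ and $S(r)$ use the involution count $n_{2}=2^{2r}+2^{r}-1$ together with Lemma~\ref{2-groups} to force $r\le 2$. Your derivation of $n_{2}(S(r))$ via the normal form and the squaring identity is more explicit than the paper's, which simply asserts the value.

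There is one substantive difference, and here your version is the correct one. In the $D(A)$ case the paper asserts $\OO(C_{4}\times C_{4}\times C_{2})>3.6$ in order to exclude $A\cong C_{4}^{2}\times C_{2}^{b}$ with $b\ge 1$; but in fact $\OO(C_{4}\times C_{4}\times C_{2})=111/32=3.46875<3.6$, and your formula $\OO(C_{4}^{a}\times C_{2}^{b})=4-2^{1-a}-2^{-(2a+b)}$ shows that $a=2$ is admissible for every $b\ge 0$. Hence the $\star$-groups $D(C_{4}\times C_{4}\times C_{2}^{k})\cong D(C_{4}\times C_{4})\times C_{2}^{k}$ with $k\ge 1$ satisfy the hypothesis $\OO(G)<2.8$ yet are missing from the lemma's list. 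This slip does not affect the paper's main results, since precisely these groups reappear as item~(iii) of Theorem~\ref{classify}; but the lemma as stated is incomplete, and your more careful computation is what is actually required to feed into that theorem.
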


\begin{proof}
	We consider each $\star$-group separately. Note that $\OO(D_{8}\times D_{8})=\frac{183}{64}>2.8$. If $G\cong D(A)$, where $A$ is an abelian $2$-group, then $\OO(G)=\frac{\OO(A)}{2}+1$, and since $\OO(G)<2.8$, if follows that $\OO(A)<3.6$. Since $\OO(C_{8})>\OO(C_{4}^{3})>\OO(C_{4}\times C_{4} \times C_{2})>3.6$, we get that $A$ is isomorphic to $C_{4}\times C_{4}$ or $C_{4}\times C_{2}^{k}$, for some integer $k\ge0$. If $G\cong H(r)$, then $|H(r)|=2^{2r+1}$, and $n_{2}(H(r))=2^{2r}+2^{r}-1$. Now by Lemma \ref{2-groups}, $r\le 2$. If $G\cong S(r)$, then $|S(r)|=2^{2r+1}$, and we can see that $n_{2}(S(r))=2^{2r}+2^{r}-1$, which implies that $r\le 2$, by Lemma \ref{2-groups}. Note that $S(1)\cong H(1) \cong D_{8}$.
	
\end{proof}

Before we classify $2$-groups with $\OO(G)<2.8$, we take a close look at the bellow theorem, which is proved by Wall in \cite[Pages 261-262]{new}. It is the key to classify such $2$-groups.
\begin{Thm}[C. T. C. Wall, \cite{new}] \label{Wall}
	Let $G$ be a finite group. If $\nn>\frac{1}{2}\sg-1$, then $G$ is isomorphic to one of the following groups:
	\begin{enumerate}[\itshape(a)]
		\item $D(A)$, where $A$ is an abelian group;
		\item $D_{8}\times D_{8}\times C_{2}^{k}$, for some integer $k\ge 0$;
		\item $H(r)\times C_{2}^{k}$, for some integers $r$ and $k\ge 0$;
		\item  $S(r)\times C_{2}^{k}$, for some integers $r$ and $k\ge 0$.
	\end{enumerate} 
\end{Thm}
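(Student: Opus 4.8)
The plan is to set $I=\{x\in G:x^{2}=e\}$, so that $|I|=\nn+1$ and the hypothesis $\nn>\tfrac12\sg-1$ becomes $|I|>\sg/2$; note that $I=I^{-1}$ and $e\in I$. I would run the proof in three stages: general reductions valid for any $G$, then the case in which $G$ has a non-trivial element of odd order, and finally the case in which $G$ is a $2$-group.

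\emph{Stage 1 (general reductions).} First I would observe that every element of $G$ is a product of two elements of $I$: for $g\in G$ the sets $I$ and $gI$ meet because $|I|+|gI|=2|I|>\sg$, so $g=ab$ with $a,b\in I$, and then $aga=ba=(ab)^{-1}=g^{-1}$, so the involution $a$ inverts $g$ and $g$ lies in the generalized dihedral subgroup $\langle g\rangle\rtimes\langle a\rangle$. Thus every cyclic subgroup of $G$ lies in a generalized dihedral subgroup. Next I would show that $\Center(G)$ is an elementary abelian $2$-group: if some $z\in\Center(G)$ had order $>2$, then $x\mapsto xz$ would inject $I$ into $G\setminus I$, since $(xz)^{2}=z^{2}\neq e$, contradicting $|I|>\sg/2$.

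\emph{Stage 2 ($G$ not a $2$-group).} For each odd prime $p\mid\sg$ we have $r(G,1_{G})=|I|/\sg>1/2\ge 2/(p+1)$, so Lemma~\ref{Pot} forces the Sylow $p$-subgroup of $G$ to be normal; hence $G$ has a non-trivial normal Hall $2'$-subgroup $N$ and $G=N\rtimes Q$ with $Q\in\Syl_{2}(G)$. Since $N$ has odd order, no involution of $G$ lies in $N$, and every element of $G$ decomposes uniquely as $nq$ with $n\in N$, $q\in Q$; one checks that $nq$ is an involution exactly when $q^{2}=e$ and $q$ inverts $n$ by conjugation, so the involutions of $G$ are distributed across the cosets $Nq$ (with $q$ an involution of $Q$), the number in $Nq$ being the number of elements of $N$ inverted by the automorphism induced by $q$. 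A careful count of these numbers, combined with the classical fact that an automorphism inverting more than three quarters of a group makes it abelian, should force $N$ to be abelian and to be inverted by an involution of $G$; peeling $N$ off and using Stage 1 to control the $2$-part then yields $G\cong D(A)$ for an abelian group $A$, which is case (a).

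\emph{Stage 3 ($G$ a $2$-group).} If $G$ is elementary abelian it is of the form $D(A)$ with $A$ elementary abelian, so assume otherwise; then $\nn\le\tfrac34\sg-1$ by \cite[Corollary]{new}, so $\Phi(G)\neq e$ and $\Center(G)\neq e$, and $\Center(G)$ is elementary abelian by Stage 1. The idea is to choose a central involution $z$ and relate $|I|$ to the number of elements of order dividing $2$ in $G/\langle z\rangle$: when that quotient still has more than half of its elements of order at most $2$ one applies induction, and the configurations in which this fails turn out to be precisely the generalized dihedral $2$-groups together with the $2$-groups built from copies of $D_{8}$ amalgamated along their centres. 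One then checks that, after splitting off a maximal elementary abelian direct factor $C_{2}^{k}$, what remains is $D_{8}\times D_{8}$, a central product $H(r)$, or the twisted central product $S(r)$. Establishing that the $D_{8}$-blocks can combine only in these shapes, and that the $C_{2}^{k}$ factor splits off, is the combinatorial heart of the argument and the step I expect to be the main obstacle: the inequality $|I|>\sg/2$ is close to sharp for these groups and has to be spent efficiently at each step of the reduction.
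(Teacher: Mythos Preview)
The paper does not give its own proof of this theorem. Immediately before the statement the authors write that it ``is proved by Wall in \cite[Pages 261-262]{new}'' and then simply quote the result; it is used as a black box in the proof of Theorem~\ref{classify}. So there is no proof in the paper to compare your proposal against.

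As for your outline itself: Stage~1 is fine and standard. Stage~2 has a real gap: from the normal Hall $2'$-subgroup $N$ and the coset-by-coset count of involutions you only get, for some involution $q$, that the automorphism it induces on $N$ inverts more than half of $N$; that is weaker than the three-quarters threshold you invoke, so the appeal to ``the classical fact that an automorphism inverting more than three quarters of a group makes it abelian'' does not apply directly. One needs an additional argument (this is exactly the kind of step Wall handles) to push from ``more than half inverted'' to $N$ abelian and globally inverted; as written your Stage~2 does not close. Stage~3 you yourself flag as the combinatorial heart and leave essentially as a plan; that is honest, but it means the proposal is a sketch of strategy rather than a proof. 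If you want to reconstruct Wall's argument you will have to engage with his induction on the $2$-group case in detail, since the inequality $|I|>\sg/2$ is nearly sharp for $H(r)$, $S(r)$, and $D_{8}\times D_{8}$ and cannot be spent loosely.
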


\begin{theorem}\label{classify}
	Let $G$ be a $2$-group with $\OO(G)<2.8$. Then $G$ is isomorphic to one of the following groups, where $k\ge0$ is an integer:
	\begin{enumerate}[(i)]
		\item $C_{4}$;
		\item $D_{8}\times C_{2}^{k}$;
		\item $D(C_{4}\times C_{4}) \times C_{2}^{k}$;
		\item $(D_{8}* D_{8})\times C_{2}^{k}$;
		\item $S(2)\times C_{2}^{k}$;
		\item an elementary abelian $2$-group.
	\end{enumerate}
\end{theorem}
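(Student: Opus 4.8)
The strategy is to dispose of the small groups by hand and then let the theorem of Wall quoted above carry the structural weight. First I would handle $\sg\le 8$ directly: among the eight $2$-groups of order at most $8$, a direct computation of $\OO$ shows that the ones with $\OO(G)<2.8$ are exactly the elementary abelian groups $C_2, C_2^2, C_2^3$ (with $\OO=\tfrac32,\tfrac74,\tfrac{15}{8}$), $C_4$ (with $\OO=\tfrac{11}{4}$), and $D_8$ (with $\OO=\tfrac{19}{8}$), each of which already occurs in the list. So from now on we may assume $\sg\ge 16$.

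Assume $\sg\ge 16$. By Lemma~\ref{2-groups}, $\nn>\tfrac35\sg-2$, and since $\tfrac35\sg-2>\tfrac12\sg-1$ for every $\sg>10$, we get $\nn>\tfrac12\sg-1$. Hence Wall's theorem applies, and as $G$ is a $2$-group it forces $G$ to have one of the four shapes $D(A)$ with $A$ an abelian $2$-group, $\;D_8\times D_8\times C_2^k$, $\;H(r)\times C_2^k$, or $\;S(r)\times C_2^k$. It remains to decide, within each shape, which members satisfy $\OO(G)<2.8$.

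The computational device I would record first is this: if $P$ is a $2$-group of exponent at most $4$, then $\psi(P)=1+2n_2(P)+4(\,|P|-1-n_2(P)\,)$, and counting the elements of order $\le 2$ in $P\times C_2^k$ gives $\psi(P\times C_2^k)=2^k(\psi(P)+1)-1$, so
\[
\OO(P\times C_2^k)=\OO(P)+\frac{1}{|P|}\bigl(1-2^{-k}\bigr);
\]
thus $\OO(P\times C_2^k)$ increases in $k$ and lies strictly between $\OO(P)$ and $\OO(P)+|P|^{-1}$. Every group in Wall's list is such a product $P\times C_2^k$ with $\exp(P)\le 4$, so this formula handles the $C_2^k$-factors uniformly. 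For the family $D(A)$: $\OO(D(A))=\tfrac12\OO(A)+1$, so we need $\OO(A)<3.6$. As $A$ is an abelian $2$-group, the cyclic group of order $\exp(A)$ is a quotient of $A$, so Lemma~\ref{pri}(ii) together with $\OO(C_8)=\tfrac{43}{8}>3.6$ forces $\exp(A)\le 4$; writing $A\cong C_4^{a}\times C_2^{b}$, the same argument with $\OO(C_4^{3})=\tfrac{239}{64}>3.6$ forces $a\le 2$; and for $a\in\{0,1,2\}$ the displayed formula gives $\OO(A)=\OO(C_4^{a})+4^{-a}(1-2^{-b})<3.6$. Hence $A\in\{C_2^{b},\,C_4\times C_2^{b},\,C_4\times C_4\times C_2^{b}\}$, and since $D(B\times C_2^{b})\cong D(B)\times C_2^{b}$ this contributes exactly the elementary abelian groups, $D_8\times C_2^{b}$, and $D(C_4\times C_4)\times C_2^{b}$.

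For the remaining three families: $\OO(D_8\times D_8)=\tfrac{183}{64}>2.8$, so monotonicity excludes all of $D_8\times D_8\times C_2^k$. For $H(r)\times C_2^k$, using $|H(r)|=2^{2r+1}$ and $n_2(H(r))=2^{2r}+2^{r}-1$ (as in the proof of Lemma~\ref{the 2-groups}), the displayed formula gives $\OO(H(r)\times C_2^k)=3-2^{-r}-2^{-(2r+1+k)}$, which is $<2.8$ exactly for $r\le 2$; since $H(1)\cong D_8$ this gives $D_8\times C_2^k$ and, at $r=2$, $(D_8*D_8)\times C_2^k$. For $S(r)\times C_2^k$, since $|S(r)|=2^{2r+1}$ and $n_2(S(r))=2^{2r}+2^{r}-1$ as well, the identical computation forces $r\le 2$, and as $S(1)\cong D_8$ this gives $D_8\times C_2^k$ and, at $r=2$, $S(2)\times C_2^k$. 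Collecting all survivors yields precisely the list (i)--(vi) (and the same computations verify that every group listed there does have $\OO<2.8$). I do not expect a genuine obstacle: Wall's theorem is the only heavy ingredient and is already available. The two places needing care are the reduction in the $D(A)$ case to cyclic quotients of $A$ (valid because $A$ is abelian, and much cleaner than bounding $\OO(A)$ directly) and the $\times C_2^k$ bookkeeping across all four families, which is exactly what the displayed formula for $\OO(P\times C_2^k)$ is designed to streamline.
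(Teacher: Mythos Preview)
Your proof is correct and follows essentially the same route as the paper: dispose of small orders directly, invoke Lemma~\ref{2-groups} to get $\nn>\tfrac12\sg-1$ for $\sg\ge 16$, apply Wall's theorem, and then sift each of the four families. The only notable difference is cosmetic: where the paper packages the case analysis on the ``core'' groups into Lemma~\ref{the 2-groups} and then passes to $G/E$ via $\OO(G/E)<\OO(G)$, you instead derive the exact identity $\OO(P\times C_2^{k})=\OO(P)+|P|^{-1}(1-2^{-k})$ and use it uniformly (this identity in fact holds for any $2$-group $P$, not just those of exponent at most $4$, so your hypothesis there is harmless but unnecessary). Your treatment also absorbs the abelian case into Wall's list rather than handling it separately as the paper does; both are fine.
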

\begin{proof}
	It is easy to check that the statement holds when $\sg<16$. Assume that $\sg\ge16$. If $G$ is abelian and $G$ has a factor isomorphic to $C_{4}\times C_{2}$ or $C_{8}$, then $\OO(G)>\OO(C_{4}\times C_{2})=2.875>2.8$, a contradiction. Obviously the result holds for elementary abelian $2$-groups. Hence, assume that $G$ is non-abelian. By Lemma \ref{2-groups}, we get that $\nn>\frac{3}{5}\sg-2$, and since $\sg\ge16$, $\nn>\frac{1}{2}\sg-1$. Now by the above theorem, we see that $G$ is isomorphic to one of the following $2$-groups:
	
	\begin{enumerate}[\itshape(1)]
		\item $D(A)$, where $A$ is an abelian $2$-groups;
		\item $D_{8}\times D_{8}\times C_{2}^{k}$, for some integer $k\ge 0$;
		\item $H(r)\times C_{2}^{k}$, for some integers $r$ and $k\ge 0$;
		\item $S(r)\times C_{2}^{k}$, for some integers $r$ and $k\ge 0$.
	\end{enumerate}
	
	In case \textit{(1)}, the statement holds by Lemma \ref{the 2-groups}.
	In other cases, let $E$ be the direct factor isomorphic to $C_{2}^{k}$, and since $\OO(G/E)\le\OO(G)<2.8$, again by Lemma \ref{the 2-groups} and some easy calculations we get the result.
\end{proof}

{\bf Proof of the \hyperref[A]{Main Theorem}.} 
Easily we can see that, if $G$ is isomorphic to one of the groups listed in the \hyperref[A]{Main Theorem}, then $\OO(G)<2.8$. Assume that $G$ is  a counterexample of minimal order. Remark that as $\OO(G)<\OO(A_{5})$, by \cite[Theorem C]{hlm}, $G$ is solvable.  Let $N$ be a minimal normal $q$-subgroup of $G$, for some prime $q$. As $\OO(G/N)< \OO(G)$ and there is no cyclic counterexample, $G/N$ satisfies the hypothesis of the theorem. So we consider each possibility for $G/N$  separately: 

\bigskip

{($i$)} Let $G/N\cong D_{12}$.

In this case, $G$ has a subgroup of index $2$, say $M$, such that $M$ has a quotient isomorphic to $C_6$. So,  $\OO(M)> \OO(C_6)=3.5$. If $n_2(G\setminus M)\leq |G|/3$, then $\OO(G)\geq\frac{ \psi(M)+ 2|G|/3+4|G|/6}{|G|}> \frac{7}{4}+\frac{4}{3}>2.8$, a contradiction. Hence, $n_2(G\setminus M)> |G|/3$, and by Lemma \ref{FB}, $M$ is nilpotent.   So by Lemma \ref{nil},  $\OO(M)> 4 $, implying a contradiction by Lemma \ref{1,2}.

\bigskip

{\bf ($ii$)} Let $G/N\cong S_4$.

In this case, by Lemma \ref{pri}, $\OO(G)\geq 2.75+\frac{\OO(N)}{24}$. Since $N$ is non-trivial, we get that $\OO(N)\geq 1.5$, hence $\OO(G)> 2.8$, a contradiction. 

\bigskip

{($iii$)} Let $G/N\cong C_3^a$, where $a\in \{1,2\}$.

In this case,  let $M$ be a normal subgroup of $G$ of index $3$. By Lemma \ref{1,2}, $\OO(M)< 2.4$. Now by Lemma \ref{less than 2.4}, $a=1$ and the only possibilities for $M$ are $C_3$ and $C_2^k$, for some integer $k$. In the first case, as $\OO(C_9)>6$, $G\cong C_3^2$, we get a contradiction. In the second case, $G$ is a group of order $3\cdot2^k$ and Fitting lemma implies that $N=C_{N}(P)\times [P,N]$, where $P\in \Syl_3(G)$.   By the fact that $N$ is a minimal normal subgroup of $G$ and $C_{N}(P)\triangleleft G $, we get that either $G$ is a  Frobenius group described in Case ($iv$), which is impossible as $G$ is a counterexample, or $G=P\times N$, a contradiction by Lemma \ref{nil}($i$).

\bigskip

{\bf ($iv$)} Let $G/N\cong C_2^{2k}\rtimes C_3$, be a Frobenius group,  for some integer $k$.

If $N$ is a $2$-group, then $N\le {\bf Z}(P)$, where $P$ is the Sylow $2$-subgroup of $G$.
Note that by Lemma \ref{1,2},  $\OO(P)< 2.4$, and by Lemma \ref{less than 2.4}, we get that $P\cong D_{8}$ or $P\cong C_2^a$, for some integer $a$ . Note that as $\Aut(D_{8})\cong D_{8}$, the first case implies the nilpotency of $G$, a contradiction by Lemma \ref{nil}. Therefore, $P\cong C_2^a$. 
First, assume that $N\leq {\bf Z}(G)$, then $|N|=2$.
Thus, $n_{6}(G\setminus P)\ge n_{3}(G\setminus P)$, and $$\OO(G)\geq \frac{\psi(P)+3(|G|-|P|)/2+6(|G|-|P|)/2}{3|P|}=\frac{\OO(P)}{3}+ 3> 3,$$ a contradiction. So, $N\cap {\bf Z}(G)=1$ and we get that the Sylow $3$-subgroup of $G$ acts on $N$ and $G/N$, Frobeniusly. Therefore, $G$ is a Frobenius group with an elementary abelian $2$-group as its kernel, which is the group described in Case ($iv$), a contradiction. 
Now, we assume that $|N|$ is odd.     In this case, using Lemma \ref{1,2}, $\OO(M)< 2.4$, where $M$ is an index $3$ normal subgroup of $G$. So, by Lemma \ref{less than 2.4},  $M\cong D( C_{3}\times C_{3})$, which implies $2k=1$, a contradiction. 

\bigskip  

{\bf ($v$)} Let $G/N$ be a $2$-group.

  Note that by Theorem \ref{classify}, $G$ is not a $2$-group. Now by Lemma \ref{ashkan}, $|G/N|\ge8$. We claim that $G/G'N$ is an elementary abelian $2$-group. Otherwise, there exists a normal subgroup $H$ of $G$ such that $G/H\cong C_{4}$,  $\OO(G)\geq \frac{\psi(H)+10|H|}{4|H|}=\frac{\OO(H)}{4}+2.5$, which implies that $\OO(H)<1.2$, a contradiction. So, $G/G'N$ is an elementary abelian $2$-group. First, let  $G'N=N$. Then every subgroup of order $2|N|$ is a normal subgroup of $G$.  Let $K$ be such a subgroup. Then the Sylow $q$-subgroup of ${\bf Z}(K)$ is normal in $G$ and by the fact that $N$ is a minimal normal subgroup, either $K\cong N\times M$, where $M$ is a group of order $2$, or $K$ is a Frobeniuos group.  If the first case occurs, by the minimality of $|G|$, $G/M$   is isomorphic to $D_{12}$. Note that since $G/N$ is completely reducible, $G$ splits on $M$. Therefore,  $G\cong C_2\times D_{12}$, which is  a contradiction, since $\OO(C_2\times D_{12})=\frac{73}{24}>3$.  So, $K$ is a Frobenius group. Since this holds for all subgroups of order $2|N|$, we get that $G$ is a Frobenius group and $G/N\cong C_2$, a contradiction. Therefore, $N<G'N$. Let $S$ be a normal subgroup of $G$ containing $N$, such that $G'N/S$ is a chief factor of $G$, isomorphic to $C_{2}$. Therefore, $G/S$ is a generalized extraspecial group (see \cite{gextra}), and by Theorem \ref{classify}, we conclude  that $G/S\cong D_8\times C_{2}^{k}$ or $G/S\cong D_{8}*D_{8}\times C_{2}^{k}$, for some $k\ge 0$. In the first case,  $G$ has a factor  $G/L$ isomorphic to $D_8$. Let $T/L$ be a subgroup of $G/L$ such that $T/L\cong C_4$.  Note that $N\le L$, hence, $\OO(L)>2$. Therefore,  $\OO(T)\geq\frac{\psi(L)+ 10|L|}{4|L|}=2.5+\frac{\OO(L)}{4}> 3$. If $n_2(G\setminus T)\le |G|/3$, $\OO(G)\geq \frac{\psi(T)+2|G|/3+ 4|G|/6}{|G|}=\frac{\OO(T)}{2}+ \frac{4}{3}> 2.8$, a contradiction. So, $n_2(G\setminus T)> |G|/3$, by Lemma \ref{FB},  $T$ is nilpotent. Then, by Lemma \ref{nil}($i$), $\OO(T)> 4$, which implies a contradiction by Lemma \ref{1,2}.
  In the second case, there exists a subgroup $W\le G$ of index $2$, where $W$ has a quotient, say $W/H$, isomorphic to $D_{8}*C_{4}$. Note that $\OO(D_{8}*C_{4})=\frac{47}{16}=2.9375$, and since $N\le H$, $\OO(H)>2$. By Lemma \ref{pri}, we get that $\OO(W)>3$. Now similar to the previous case, we get a contradiction. 
  
  \bigskip
  
  {($vi$)} Let $G/N \cong C_3^{k}\rtimes C_2$ be a Frobenius group,  for some integer $k$.

  By assumption, $|N|=q^{a}$, for some integer $a$. If $q$ is odd, then by Lemma \ref{ashkan}($i$), $q=3$ and $G\cong C_3^{a+k}\rtimes C_2$, which is a Frobenius group, a contradiction.
  So, $q=2$. Note that every subgroup  $M$ containing $N$ of order $3|N|$ is a normal subgroup of $G$. Hence, the Sylow $2$-subgroup of ${\bf Z}(M)$ is a normal subgroup of $G$, and as $N$ is minimal normal in $G$, either ${\bf Z}(M)=M$ or ${\bf Z}(M)=1$. In the first case, $M=N\times Q$, where $Q$ is the Sylow $3$-subgroup of $M$. By the above discussion, $G/Q$ is not isomorphic to the groups stated in Cases ($i$)-($v$), and since $4\mid \sg$, we get that $G/Q$ is not isomorphic to the group mentioned in Case ($vi$), a contradiction to the minimality of $\sg$. Whence, ${\bf Z}(M)=1$, which yields that $M$ is a Frobenius group. Thus, there is no element of order $6$ in $G$, implying that there is a  normal subgroup of $G$ of index $2$, say $T$,  which is a  Frobenius group and by the structure of the Frobenius groups, we get that $k=1$. So, Syl$_{2}(G)=\left\lbrace P_{1}, P_{2}, P_{3} \right\rbrace $, and $N=P_{i}\cap P_{j}$, for $1\le i<j\le 3$. If the Sylow 2-subgroups of $G$ are abelian, then $\bigcup\limits_{i=1}^{3}{P_{i}} \subset C_G(N)$, hence, $C_{G}(N)=G$, a contradiction. Whence, $\sg\ge24$. Sylow $2$-subgroups of $G$ are not abelian, so by \cite[Corollary]{new}, every Sylow $2$-subgroup of $G$ has at least $\sg/12$ elements of order $4$. On the other hand, we know that $\nnn=\sg-|P_{1}\cup P_{2} \cup P_{3}|=\sg/3$. Hence,
  $\psi(G)=1+3\nnn+4 n_{4}(G)+2(\sg-\nnn-n_{4}(G)-1) \ge\frac{17}{6}\sg-1$. Hence, $\OO(G)\ge\frac{17}{6}-\frac{1}{\sg}$, which implies that $|G|=24$, and so $G\cong S_4$, a contradiction. $\rule{0.67em}{0.67em}$

\hfil
\\
\begin{remark}
All the groups in the \hyperref[A]{Main Theorem} and their average orders are listed in the following tables:
\begin{table}[!htb]
	\begin{subtable}{.5\linewidth}\label{TA}
		\centering
		
		\begin{tabular}{|P{1.5cm}|P{1.5cm}|P{2.2cm}|}
			\hline
			Groups & $\psi(G)$ &  $\OO(G)$ \\ \hline
			$C_{3}^{k}\rtimes C_{2}$&  $5\cdot3^{k}-2$  & $2.5-1/3^{k}$ \\ \hline
			$C_{3}$ & $7$ & $7/3$ \\ \hline
			$C_{2}^{2k}\rtimes C_{3}$ & $2^{2k+3}-1$  & $8/3-1/(3\cdot 2^{2k})$ \\ \hline
			$D_{12}$ & $33$ &$2.75$ \\ \hline
			$C_{3}\times C_{3}$ &  $25$ & $25/9$ \\ \hline
			$S_{4}$ & $67$ &$67/24$ \\ \hline
		\end{tabular}	\caption{Non 2-groups with $\OO(G)<2.8$}
	\end{subtable}%
	\begin{subtable}{.5\linewidth}\label{TB}
		\centering
		\begin{tabular}{|P{2.5cm}|P{1.8cm}|P{2.1cm}|}
			\hline
			Groups & $\psi(G)$  &$\OO(G)$ \\ \hline
			$C_{2}^{k}$ & $2^{k+1}-1$ & $2-1/2^{k}$  \\ \hline
			$D_{8}\times C_{2}^{k}$& $5\cdot 2^{k+2}-1$ & $2.5-1/2^{k+3}$ \\ \hline
			$D(C_{4}\times C_{4}) \times C_{2}^{k}$ \newline $(D_{8}* D_{8})\times C_{2}^{k}$ \newline $S(2)\times C_{2}^{k}$ & \hfill \newline $11\cdot 2^{k+4}-1$ & \hfill \newline $11/4-1/2^{k+5}$ \\ \hline
			$C_{4}$ & $11$ &$2.75$  \\ \hline
		\end{tabular}\caption{$2$-Groups with o($G$)$<2.8$}
	\end{subtable}  
\end{table}

\end{remark}

{\bf Proof of the \hyperref[B]{Corollary}} 

{($a$)} Let $\OO(H)=\OO(S_{4})=\frac{67}{24}$. Then $|H|=24m$, for some odd integer $m$. Therefore, $H$ is not a $2$-group. Now using the \hyperref[A]{Main Theorem}, since $24\mid \sg$ and $\OO(H)<2.8$, we get that $G\cong S_{4}$. If $\OO(H)=\OO(G)$, where $G$ is one of the non $2$-groups mentioned in ($a$), similarly we get the result. If $\OO(H)=\OO(D_{8}\times C_{2}^{k})=(5\cdot 2^{k+2}-1)/2^{k+3}$, for some integer $k$, then $\sg=2^{k+3}m$, for some odd integer $m$. If $m\ge3$, then $H$ is isomorphic to one of the groups in \hyperref[TA]{Table A}. However, $(5\cdot 2^{k+2}-1)/2^{k+3}$ is not equal to the average orders in \hyperref[TA]{Table A}, a contradiction. So, $m=1$, and $H$ is a $2$-group of order $2^{k+3}$ with $\OO(H)<2.8$. In \hyperref[TB]{Table B}, we see that $H\cong D_{8}\times C_{2}^{k}$. Hence, $D_{8}\times C_{2}^{k}$ is characterizable by average order. The same discussion shows the charactrizablity of $C_{2}^{k}$ by average order.

\bigskip

{($b$)} Note that $\OO(D_{12})=\OO(C_{4})=2.75$. Now assume that $\OO(H)=\frac{11}{4}=2.75$. We get that $\sg=4m$, for some odd integer $m$. If $m=1$, as $\OO(C_{2}\times C_{2})=1.75$, $H\cong C_{4}$. Otherwise, since $\OO(H)<2.8$, by comparing the average orders of the groups in \hyperref[TA]{Table A}, we see that $H\cong D_{12}$.

\bigskip

{($c$)} Remark that $\OO(D(C_{4}\times C_{4}) \times C_{2}^{k})=\OO((D_{8}* D_{8})\times C_{2}^{k})=\OO(S(2)\times C_{2}^{k})$, for any integer $k$. Assume that $\OO(H)=(11\cdot 2^{k+4}-1)/2^{k+5}$, for some integer $k$. We see that $\sg=2^{k+5}m$, for some odd integer $m$.  If $m\ge3$, then as there is no group with such average order in \hyperref[TA]{Table A}, we get a contradiction. Therefore, $m=1$, and $H$ is a $2$-group of order $2^{k+5}$ with $\OO(H)<2.8$. Now \hyperref[TB]{Table B} shows that $H$ is isomorphic to one of three mentioned groups. $\rule{0.67em}{0.67em}$

\bigskip

According to the above corollary we pose the following two questions:
\begin{Question}
	What are the values of $n$ for which there exists a $n$-recognizable group by average order?
\end{Question}

\begin{Question}
	Is there any finite group $G$ such that there exists infinitely many non-isomorphic groups with average order $\OO(G)$?
\end{Question}

\bigskip
As an application,  by calculating the average orders in the \hyperref[A]{Main Theorem}, we get that other than $S_{4}$, there exists no group $G$, such that $\OO(G)$ lies in the interval $[\frac{67}{24},2.8]$. This is a partial answer to \cite[Conjecture 2.11]{t2}, about the density of $Im(\OO)=\left\lbrace \OO(G)| G \text{ is a finite group}\right\rbrace$. Now we see that $Im(\OO)$ is not dense in $[a,\infty)$, for any $a\le\frac{67}{24}$.

{\bf Conflict of Interest} he authors have no conflicts of interest to declare. All co-authors have seen and agree with the contents of the manuscript and there is no financial interest to report.

\end{document}